\documentclass[12pt]{amsart}
\usepackage{amssymb,amscd,amsmath}
\usepackage{verbatim,color}
\usepackage{graphics}
\usepackage{url}

\usepackage{graphicx}

\usepackage[lined,algonl,boxed,norelsize]{algorithm2e}

\setlength{\oddsidemargin}{0.25truein}
\setlength{\evensidemargin}{0.3truein}
\setlength{\topmargin}{0.0truein}
\setlength{\textwidth}{6.0truein}
\setlength{\textheight}{8.6truein}
\marginparwidth 1.0in

\begin{document}


\numberwithin{equation}{section}

\newtheorem{theorem}[equation]{Theorem}
\newtheorem{lemma}[equation]{Lemma}
\newtheorem{conjecture}[equation]{Conjecture}
\newtheorem{proposition}[equation]{Proposition}
\newtheorem{corollary}[equation]{Corollary}
\newtheorem{cor}[equation]{Corollary}

\theoremstyle{definition}
\newtheorem*{definition}{Definition}
\newtheorem{example}[equation]{Example}

\theoremstyle{remark}
\newtheorem{remark}[equation]{Remark}
\newtheorem{remarks}[equation]{Remarks}
\newtheorem*{acknowledgement}{Acknowledgements}


\newenvironment{notation}[0]{%
  \begin{list}%
    {}%
    {\setlength{\itemindent}{0pt}
     \setlength{\labelwidth}{4\parindent}
     \setlength{\labelsep}{\parindent}
     \setlength{\leftmargin}{5\parindent}
     \setlength{\itemsep}{0pt}
     }%
   }%
  {\end{list}}

\newenvironment{parts}[0]{%
  \begin{list}{}%
    {\setlength{\itemindent}{0pt}
     \setlength{\labelwidth}{1.5\parindent}
     \setlength{\labelsep}{.5\parindent}
     \setlength{\leftmargin}{2\parindent}
     \setlength{\itemsep}{0pt}
     }%
   }%
  {\end{list}}
\newcommand{\Part}[1]{\item[\upshape#1]}

\renewcommand{\a}{\alpha}
\renewcommand{\b}{\beta}
\newcommand{\g}{\gamma}
\renewcommand{\d}{\delta}
\newcommand{\e}{\epsilon}
\newcommand{\f}{\phi}
\renewcommand{\l}{\lambda}
\renewcommand{\k}{\kappa}
\newcommand{\lhat}{\hat\lambda}
\newcommand{\m}{\mu}
\renewcommand{\o}{\omega}
\renewcommand{\r}{\rho}
\newcommand{\rbar}{{\bar\rho}}
\newcommand{\s}{\sigma}
\newcommand{\sbar}{{\bar\sigma}}
\renewcommand{\t}{\tau}
\newcommand{\z}{\zeta}

\newcommand{\D}{\Delta}

\newcommand{\gp}{{\mathfrak{p}}}
\newcommand{\gP}{{\mathfrak{P}}}
\newcommand{\gq}{{\mathfrak{q}}}
\newcommand{\gf}{{\mathfrak{f}}}

\newcommand{\Acal}{{\mathcal A}}
\newcommand{\Bcal}{{\mathcal B}}
\newcommand{\Ccal}{{\mathcal C}}
\newcommand{\Dcal}{{\mathcal D}}
\newcommand{\Ecal}{{\mathcal E}}
\newcommand{\F}{{\mathcal F}}
\newcommand{\Fcal}{{\mathcal F}}
\newcommand{\cF}{{\mathcal F}}
\newcommand{\Gcal}{{\mathcal G}}
\newcommand{\Hcal}{{\mathcal H}}
\newcommand{\Ical}{{\mathcal I}}
\newcommand{\Jcal}{{\mathcal J}}
\newcommand{\Kcal}{{\mathcal K}}
\newcommand{\Lcal}{{\mathcal L}}
\newcommand{\Mcal}{{\mathcal M}}
\newcommand{\Ncal}{{\mathcal N}}
\newcommand{\Ocal}{{\mathcal O}}
\newcommand{\Pcal}{{\mathcal P}}
\newcommand{\Qcal}{{\mathcal Q}}
\newcommand{\Rcal}{{\mathcal R}}
\newcommand{\Scal}{{\mathcal S}}
\newcommand{\Tcal}{{\mathcal T}}
\newcommand{\Ucal}{{\mathcal U}}
\newcommand{\Vcal}{{\mathcal V}}
\newcommand{\Wcal}{{\mathcal W}}
\newcommand{\Xcal}{{\mathcal X}}
\newcommand{\Ycal}{{\mathcal Y}}
\newcommand{\Zcal}{{\mathcal Z}}

\newcommand{\OO}{{\mathcal O}}    
\newcommand{\KK}{{\mathcal K}}    
\renewcommand{\O}{{\mathcal O}}   

\renewcommand{\AA}{\mathbb{A}}
\newcommand{\BB}{\mathbb{B}}
\newcommand{\CC}{\mathbb{C}}
\newcommand{\FF}{\mathbb{F}}
\newcommand{\GG}{\mathbb{G}}
\newcommand{\PP}{\mathbb{P}}
\newcommand{\NN}{\mathbb{N}}
\newcommand{\QQ}{\mathbb{Q}}
\newcommand{\RR}{\mathbb{R}}
\newcommand{\ZZ}{\mathbb{Z}}

\newcommand{\bfa}{{\mathbf a}}
\newcommand{\bfb}{{\mathbf b}}
\newcommand{\bfc}{{\mathbf c}}
\newcommand{\bfe}{{\mathbf e}}
\newcommand{\bff}{{\mathbf f}}
\newcommand{\bfg}{{\mathbf g}}
\newcommand{\bfp}{{\mathbf p}}
\newcommand{\bfr}{{\mathbf r}}
\newcommand{\bfs}{{\mathbf s}}
\newcommand{\bft}{{\mathbf t}}
\newcommand{\bfu}{{\mathbf u}}
\newcommand{\bfv}{{\mathbf v}}
\newcommand{\bfw}{{\mathbf w}}
\newcommand{\bfx}{{\mathbf x}}
\newcommand{\bfy}{{\mathbf y}}
\newcommand{\bfz}{{\mathbf z}}
\newcommand{\bfA}{{\mathbf A}}
\newcommand{\bfB}{{\mathbf B}}
\newcommand{\bfC}{{\mathbf C}}
\newcommand{\bfF}{{\mathbf F}}
\newcommand{\bfG}{{\mathbf G}}
\newcommand{\bfI}{{\mathbf I}}
\newcommand{\bfM}{{\mathbf M}}
\newcommand{\bfzero}{{\boldsymbol{0}}}
\newcommand{\bfmu}{{\boldsymbol\mu}}

\newcommand{\Aut}{\operatorname{Aut}}
\newcommand{\Div}{\operatorname{Div}}
\newcommand{\Prin}{\operatorname{Prin}}
\newcommand{\End}{\operatorname{End}}
\newcommand{\Gal}{\operatorname{Gal}}
\newcommand{\Image}{\operatorname{Image}}
\newcommand{\ord}{\operatorname{ord}}
\newcommand{\Pic}{\operatorname{Pic}}
\newcommand{\rank}{\operatorname{rank}}
\renewcommand{\setminus}{\smallsetminus}
\newcommand{\val}{{\operatorname{val}}}
\newcommand{\la}{{\langle}}
\newcommand{\ra}{{\rangle}}
\newcommand{\into}{\hookrightarrow}     
\def\onto{\twoheadrightarrow}           
\def\isomap{{\buildrel \sim\over\longrightarrow}} 

\newcommand{\exedout}{%
  \rule{0.8\textwidth}{0.5\textwidth}%
}

\newcommand{\dotcup}{\ensuremath{\mathaccent\cdot\cup}}


\title[Torsor structures on spanning trees]{The Bernardi process and torsor structures on spanning trees}

\author{Matthew Baker}
\email{mbaker@math.gatech.edu}
\address{School of Mathematics,
          Georgia Institute of Technology}
          
\author{Yao Wang}
\email{yw10@princeton.edu}
\address{Department of Mathematics,
          Princeton University}
          
\begin{abstract}
Let $G$ be a {\em ribbon graph}, i.e., a connected finite graph $G$ together with a cyclic ordering of the edges around each vertex.  By adapting a construction due to Olivier Bernardi, we associate to any pair $(v,e)$ consisting of a vertex $v$ and an edge $e$ adjacent to $v$ a bijection $\beta_{(v,e)}$ between spanning trees of $G$ and elements of the set $\Pic^g(G)$ of degree $g$ divisor classes on $G$, where $g$ is the {\em genus} of $G$ in the sense of Baker-Norine.  We give a new proof that the map $\beta_{(v,e)}$ is bijective by explicitly constructing an inverse. Using the natural action of the Picard group $\Pic^0(G)$ on $\Pic^g(G)$, we show that
the Bernardi bijection $\beta_{(v,e)}$ gives rise to a simply transitive action $\beta_v$ of $\Pic^0(G)$ on the set of spanning trees which does not depend on the choice of $e$.
A {\em plane graph} has a natural ribbon structure (coming from the counterclockwise orientation of the plane), and in this case we show that $\beta_v$ is
independent of $v$ as well.  Thus for plane graphs, the set of spanning trees is naturally a torsor for the Picard group.  Conversely, we show that if $\beta_v$ is independent of $v$
then $G$ together with its ribbon structure is planar.  We also show that the natural action of $\Pic^0(G)$ on spanning trees of a plane graph is compatible with planar duality.

\medskip

These findings are formally quite similar to results of Holroyd et al. and Chan-Church-Grochow, who used {\em rotor-routing} to construct an action $r_v$ of $\Pic^0(G)$ on the spanning trees of a ribbon graph $G$, which they show is independent of $v$ if and only if $G$ is planar.  It is therefore natural to ask how the two constructions are related.  We prove
that $\beta_v = r_v$ for all vertices $v$ of $G$ when $G$ is a planar ribbon graph, i.e. the two torsor structures (Bernardi and rotor-routing) on the set of spanning trees coincide.  In particular, it follows that the rotor-routing torsor is compatible with planar duality.
We conjecture that for every non-planar ribbon graph $G$, there exists a vertex $v$ with $\beta_v \neq r_v$.
\end{abstract}

\thanks{We thank Spencer Backman, Melody Chan, and Dan Margalit for helpful discussions and feedback on an earlier draft.  We also thank 
Chi Ho Yuen for catching a sign error in Theorem~\ref{theorem:planar_duality}, and the anonymous referees for their extraordinarily
careful proofreading and numerous useful suggestions.  This work began 
at the American Insitute of Mathematics workshop ``Generalizations of chip firing and the critical group'' in July 2013, and we would like to thank AIM as well as the organizers of that conference (L. Levine, J. Martin, D. Perkinson, and J. Propp) for providing a stimulating environment.  The first author was supported in part by NSF grants DMS-1201473 and DMS-1529573.}

\maketitle


\section{Introduction}

If $G$ is a connected graph on $n$ vertices, the {\em Picard group} $\Pic^0(G)$ of $G$ (also called the sandpile group, critical group, or Jacobian group) is a finite abelian group whose cardinality is the determinant of any $(n-1)\times (n-1)$ principal sub-minor of the Laplacian matrix of $G$.  By Kirchhoff's Matrix-Tree Theorem, this quantity is equal to the number of spanning trees of $G$.
There are several known families of bijections between spanning trees and elements of $\Pic^0(G)$, which we think of as giving {\em bijective proofs} of Kirchhoff's Theorem -- see for example \cite{Baker-Shokrieh} and the references therein.
However, such bijections depend on various auxiliary choices, and there is no canonical bijection in general between $\Pic^0(G)$ and the set $S(G)$ of spanning trees of $G$ (see \cite[p. 2]{CCG}).

\medskip

Jordan Ellenberg asked if it might be the case that, under certain conditions, $S(G)$ is naturally a {\em torsor}\footnote{A {\em torsor} for a group $H$ is a set $S$ together with a simply transitive action of $H$ on $S$, i.e. an action such that for every $x,y \in S$ there is a unique $h \in H$ for which $h \cdot x = y$.  The existence of such an action implies, in particular,
that $|H|=|S|$.} for $\Pic^0(G)$.
This question was thoroughly studied in the paper \cite{CCG}, where it was established via the {\em rotor-routing process} that given a {\em plane graph}\footnote{In this paper we 
distinguish between a {\em planar graph}, which is a graph $G$ that can be embedded in $\RR^2$ with no crossings, and a {\em plane graph}, which is a graph $G$ together with such an embedding.} there is a canonical simply transitive action of $\Pic^0(G)$ on $S(G)$.
More generally, if one fixes a {\em ribbon structure} on $G$ and then chooses a root vertex $v$,
rotor-routing produces a simply transitive action $r_v$ of $\Pic^0(G)$ on $S(G)$ which is shown in \cite{CCG} to be independent of $v$ {\em if and only if} $G$ together with its ribbon structure is planar.

\medskip

The first author learned of the results of \cite{CCG} at a 2013 AIM workshop on ``Generalizations of Chip Firing'', and at the same workshop he learned about an interesting family of bijections due to Olivier Bernardi \cite{Bernardi} between spanning trees, root-connected out-degree sequences, and recurrent sandpile configurations.  Bernardi's bijections depend on choosing a ribbon structure, a root vertex $v$, and an edge $e$ adjacent to $v$.
It was natural to ask whether Bernardi's bijections became torsor structures upon forgetting the edge $e$, and whether the resulting torsor was again independent of $v$ in the planar case.  The present paper answers these questions affirmatively.  Specifically, given a ribbon graph $G$, we show that:

\begin{enumerate}
\item[(1)] (Theorem~\ref{theorem:independent_of_e}) Fixing a vertex $v$, the Bernardi process defines a simply transitive action $\beta_v$ of $\Pic^0(G)$ on $S(G)$.
\item[(2)] (Theorems~\ref{theorem:planar_canonical} and \ref{theorem:non-planar_non-canonical}) The action $\beta_v$ is independent of $v$ if and only if the ribbon graph $G$ is planar.
\item[(3)] (Theorem~\ref{theorem:bernardi_rotor-routing_comparison}) If $G$ is planar, the Bernardi and rotor-routing torsors coincide.
\end{enumerate}

We also give an example which shows that if $G$ is non-planar then $\beta_v$ and $r_v$ do not in general coincide.
In fact, we conjecture that if $G$ is non-planar then there {\em always} exists a vertex $v$ such that $\beta_v \neq r_v$.

\medskip

We also investigate the relationship between the Bernardi process and planar duality.  It is known that if $G$ is a planar graph then there is a canonical isomorphism
$\Psi : \Pic^0(G) \isomap \Pic^0(G^\star)$ and a canonical bijection $\sigma: S(G) \isomap S(G^\star)$.
It is thus natural to ask whether the Bernardi torsor is compatible with these isomorphisms.  We prove that this is indeed the case:

\begin{enumerate}
\item[(4)] (Theorem~\ref{theorem:planar_duality}) If $G$ is planar, the following diagram is commutative:
\[
\begin{CD}
\Pic^0(G) \times S(G) @>{\beta}>> S(G) \\
@VV{\Psi \times \sigma}V	@VV{\sigma}V \\
\Pic^0(G^\star) \times S(G^\star) @>{\beta^\star}>> S(G^\star) \\
\end{CD}
\]
\end{enumerate}

Combined with (3), this proves that the rotor-routing torsor is also compatible with planar duality (which the first author had conjectured at the 2013 AIM workshop).  An independent proof (not going through the Bernardi process) of the compatibility of the rotor-routing torsor with planar duality has recently been given by \cite{Chan-et-al}.

\medskip

A key technical insight which we use repeatedly is an interpretation of Bernardi's bijections in terms of {\em (integral) break divisors} in the sense
of \cite{ABKS}.  Break divisors are in canonical bijection with elements of $\Pic^g(G)$, where $g$ is the {\em genus} of $G$ in the sense of
\cite{BN1}, and $\Pic^g(G)$ is canonically a torsor for $\Pic^0(G)$.  Among other things, we use break divisors to give a different proof from the
one in \cite{Bernardi} that Bernardi's maps are in fact bijections.
In particular, we are able to give a new family of {\em bijective proofs} of the Matrix-Tree Theorem.

\medskip

The Bernardi and rotor-routing processes are defined quite differently, so it seems rather miraculous that the two stories parallel one another so closely, intertwining in the planar case and diverging in general.  It is a strangely beautiful tale which still appears to hold some mysteries.

\medskip

The plan of this paper is as follows.  In Section~\ref{sec:background} we review the necessary background material, including Picard groups, break divisors, ribbon graphs, rotor-routing, and the Bernardi process.  In Section~\ref{sec:bijective} we give our new proof that Bernardi's maps are bijections, and in Section~\ref{sec:Bernardi_torsor} we define the action $\beta_v$ and establish (1).  In Section~\ref{sec:planar_independence} we investigate planarity and establish (2), and in Section~\ref{sec:planar_duality} we study the relationship between planar duality and the Bernardi process and establish (4).  Finally, in Section~\ref{sec:comparison}, we relate the Bernardi and rotor-routing torsors in the planar case and establish (3).

\section{Background}
\label{sec:background}

\subsection{Graphs, divisors, and linear equivalence}

Let $G$ be a {\em graph}, by which we mean a finite connected multigraph, possibly with loop edges.
We denote the vertex set by $V(G)$ and the edge set by $E(G)$, and we let $n$ be the number of vertices.
We denote by $\vec{e}$ an edge $e \in E(G)$ together with an orientation, and by $(\vec{e})^{\rm op}$ the edge $e$ with the opposite
orientation.

\medskip

Following \cite{BN1}, we define the group of {\em divisors} on $G$, denoted $\Div(G)$, to be the free abelian group on $V(G)$.
We write a divisor $D$ as $\sum_{v \in V(G)} a_v (v)$, where the $a_v$ are integers and $(v)$ is a formal symbol for the generator of $\Div(G)$ corresponding to $v$.
The {\em degree} of $D$ is defined to be $\sum a_v$, and the set of divisors of degree $d$ is denoted $\Div^d(G)$.

\medskip

Let $M(G)$ be the set of functions $f : V(G) \to \ZZ$.
We define the group of {\em principal divisors} on $G$, denoted $\Prin(G)$, to be
$ \{ \Delta f \; : \; f \in M(G) \}$, where
\[
\Delta f = \sum_{v \in V(G)} \left( \sum_{e = vw} \left( f(v) - f(w) \right) \right) (v)
\]
is the Laplacian of $f$ considered as a divisor on $G$.
We say that two divisors $D$ and $D'$ are {\em linearly equivalent}, written $D \sim D'$, if $D-D' \in \Prin(G)$.

\medskip

For each $d \in \ZZ$ we let $\Pic^d(G)$ be the set of linear equivalence classes of degree $d$ divisors on $G$.
In particular, $\Pic^0(G) = \Div^0(G) / \Prin(G)$ is a {\em group} which acts simply and transitively by addition on each $\Pic^d(G)$.
By basic linear algebra, the cardinality of $\Pic^0(G)$ (and hence of every $\Pic^d(G)$) is the determinant of any $(n-1)\times (n-1)$ principal sub-minor of the Laplacian matrix of $G$, 
where $n = |V(G)|$.

\medskip

We denote by $[D] \in \Pic^d(G)$ the linear equivalence class of a divisor $D \in \Div^d(G)$.

\subsection{Break divisors}

Let $G$ be a graph, and let $g = g_{\rm comb}(G)$ be the {\em combinatorial genus}\footnote{Note that graph theorists often use the term {\em genus} to denote the minimal topological genus of a ribbon structure on $G$ in the sense of \S\ref{sec:ribbon_graphs} below.  However, to highlight the connections with algebraic geometry in
the spirit of \cite{BN1} we will use the unmodified term {\em genus} to denote the combinatorial genus of $G$.} of $G$, defined as $\# E(G) - \# V(G) + 1$.
If $T$ is a spanning tree of $G$, then there are exactly $g$ edges $e_1,\ldots,e_g$ of $G$ not belonging to $T$.
A divisor of the form $D = \sum_{i=1}^g (v_i)$, where $v_i$ is an endpoint of $e_i$, is called a {\em $T$-break divisor}.
(In this situation we say that $D$ is {\em compatible with $T$}, or that $T$ is {\em compatible with $D$}.) 
A {\em break divisor}\footnote{In \cite{ABKS}, these are called {\em integral break divisors}.  We omit the adjective `integral' because non-integral break divisors play no role in this paper.} is a $T$-break divisor for some spanning tree $T$.
We denote by $B(G)$ the set of break divisors on $G$.

\medskip

The following important fact is proved in \cite[Theorem 4.25]{ABKS}; 
it implies, in particular, the surprising fact that the number of break divisors on $G$ is equal to the number of spanning trees:
\begin{theorem} \label{theorem:ABKS4.25}
Every element of $\Pic^g(G)$ is linearly equivalent to a unique break divisor.
\end{theorem}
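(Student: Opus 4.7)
The statement packages two claims---\emph{existence} (every class in $\Pic^g(G)$ contains a break divisor) and \emph{uniqueness} (no class contains two distinct break divisors)---which I would address in turn.

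For existence, my plan is to pass to the metric realization $\Gamma$ of $G$ (all edges of length one) and exploit the polyhedral decomposition of the tropical Jacobian $\Pic^g(\Gamma)$ indexed by spanning trees: for each spanning tree $T$ of $G$, with non-tree edges $e_1,\dots,e_g$, the closed cell $C_T$ is the image in $\Pic^g(\Gamma)$ of the ``break locus'' $\{\sum_{i=1}^g (x_i) : x_i \in e_i\}$. These cells tile $\Pic^g(\Gamma)$, so every class is represented by a continuous break divisor; for classes coming from $\Pic^g(G)$ (i.e.\ divisor classes supported on vertices), one then argues that the $x_i$ can be slid, using linear equivalence along the edges of $\Gamma$, to vertex endpoints, yielding an honest $T$-break divisor. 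A more hands-on alternative is a chip-firing algorithm: start with any effective representative of the class (one exists since $\deg = g$, via a Riemann--Roch type argument), pick a spanning tree $T$ adaptively, and shepherd chips by firing vertex subsets until they land at endpoints of non-tree edges of $T$.

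For uniqueness, suppose $D_1$ and $D_2$ are break divisors with $D_1 - D_2 = \Delta f$ for some $f : V(G) \to \ZZ$; the goal is to force $f$ constant, giving $D_1 = D_2$. The engine would be a combinatorial characterization of break divisors via \emph{subgraph inequalities} of the form $\deg(D|_A) \leq g_{\rm comb}(G[A]) + (\text{boundary correction})$ for every nonempty proper $A \subsetneq V(G)$. If $f$ is non-constant, choose $A$ to be a proper level set of $f$ (say $\{v : f(v) \leq c\}$ for the minimal $c$ witnessing non-constancy); firing $A$ shifts exactly $|E(A, V \setminus A)|$ chips from $A$ to $V \setminus A$, and comparing $D_1(A)$ with $D_2(A)$ via this count contradicts the break-divisor inequality for at least one of the two.

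The main obstacle I expect is pinning down the precise form of the subgraph inequality used in the uniqueness argument, and proving that break divisors are exactly those effective degree-$g$ divisors satisfying it---the bookkeeping of boundary edges and of components of $G[A]$ is delicate. The existence argument is cleaner in outline but still demands either the tropical polyhedral machinery on $\Pic^g(\Gamma)$ or a carefully analyzed chip-firing algorithm. A pleasant by-product, if both parts go through, is that comparing the number of break divisors to $|\Pic^g(G)|$ furnishes a bijective proof of Kirchhoff's Matrix--Tree theorem, as the introduction foreshadows.
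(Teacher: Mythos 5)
First, a point of reference: the paper does not prove this theorem at all --- it is quoted verbatim from \cite[Theorem 4.25]{ABKS}, so there is no in-paper argument to compare against. Your sketch is therefore best judged against the ABKS proof, and your existence strategy is in fact essentially theirs: ABKS work with the metric graph $\Gamma$, show that the parallelepipeds $C_T$ indexed by spanning trees tile the tropical Jacobian $\Pic^g(\Gamma)$, and deduce the integral statement. The problem is that the sentence ``these cells tile $\Pic^g(\Gamma)$'' is precisely the hard content of the theorem; as written you are assuming the conclusion. Proving that the $C_T$ cover $\Pic^g(\Gamma)$ and have disjoint interiors requires real work (ABKS use a volume/degree argument together with the Matrix--Tree theorem, plus a separate argument to get from continuous break divisors back to vertex-supported ones). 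Your chip-firing alternative has the same status: starting from an effective representative (which does exist by Riemann--Roch for graphs, since $\deg = g$) one must specify the firing rule, prove termination, and prove that the output is a break divisor --- none of which is routine. So the existence half is a plan, not a proof. Note also that you cannot rescue existence from uniqueness by counting: the number of break divisors is not obviously the number of spanning trees (a break divisor is typically compatible with many trees), so injectivity plus $|\Pic^g(G)| = |S(G)|$ does not immediately give surjectivity.

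Your uniqueness argument, by contrast, is sound and is actually easier than you fear: you only need the \emph{forward} direction of the subgraph inequality, namely that every break divisor $D$ satisfies $\deg(D|_A) \geq |E(G[A])| - |A| + c(G[A])$ for every nonempty $A \subseteq V(G)$, where $c$ denotes the number of connected components. This is immediate: if $D$ is a $T$-break divisor, then $T \cap G[A]$ is a forest with at most $|A| - c(G[A])$ edges, so at least $|E(G[A])| - |A| + c(G[A])$ non-tree edges lie entirely inside $A$, and each deposits its chip in $A$. You do not need the (harder) converse characterization. Given this, if $D_1 - D_2 = \Delta f$ with $f$ nonconstant and $A$ is the set where $f$ is maximal, then $\deg(\Delta f|_A) \geq |E(A, V \setminus A)|$, and summing the inequality for $D_2$ on $A$ and for $D_1$ on $V \setminus A$ forces $\deg(D_1) \geq g + c(G[A]) + c(G[V\setminus A]) - 1 \geq g+1$, a contradiction. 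So: uniqueness is essentially complete once you write out this easy lemma; existence is the genuine gap, and closing it requires either importing the ABKS polyhedral decomposition theorem or fully specifying and analyzing a reduction algorithm.
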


The following is a simple but useful result:

\begin{lemma}
\label{lemma:eulerchar}
The restriction of a break divisor on $G$ to a connected induced subgraph $H$ has degree at least the genus $g(H)$ of $H$.
\end{lemma}

\begin{proof}
Let $D$ be a break divisor on $G$, and let $T$ be a spanning tree compatible with $D$.  The restriction of $T$ to $H$ is a spanning forest $F$ of 
$H$ and hence $|E(F)| \leq |V(H)| - 1$.  By definition, if $e_1,\ldots,e_g$ are the edges of $G$ not belonging to $T$, we can write $D = \sum_{i=1}^g (v_i)$ 
where $v_i$ is an endpoint of $e_i$.  Let $D' = \sum_{e_i \in E(H) \backslash E(F)} (v_i)$.  Then $D|_H \geq D'$ and thus
\[
{\rm deg}(D|_H) \geq {\rm deg}(D') = |E(H)| - |E(F)| \geq |E(H)| - |V(H)| + 1 = g(H).
\]
\end{proof}

Although we will not need it in this paper, Lemma~3.3 and Proposition~4.8 of \cite{ABKS} imply, conversely, that if $D$ is a divisor on $G$ of degree 
$g(G)$ and the restriction of $D$ to every connected induced subgraph $H$ has degree at least $g(H)$, then $D$ is a break divisor.

\subsection{Ribbon graphs} \label{sec:ribbon_graphs}

A {\em ribbon graph} is a finite graph $G$ together with a cyclic ordering of the edges around each vertex.
A ribbon structure on $G$ gives an embedding of $G$ into a canonical (up to homeomorphism) closed orientable surface $S$.
(The surface $S$ is obtained by first thickening $G$ to a compact orientable surface-with-boundary $R$ and then gluing a disk to each boundary component of $R$.)
Conversely, every such embedding gives rise to a ribbon structure on $G$.  Ribbon structures are therefore sometimes called {\em combinatorial embeddings}.\footnote{Another name for ribbon structures, used widely in the topological graph theory literature, is {\em rotation systems}.}  (A good reference for basic combinatorial properties of ribbon graphs is \cite{Thomassen}.)
We refer to the genus of $S$ as the {\em topological genus} $g_{\rm top}(G)$ of $G$, and say that $G$ is {\em planar} if $g_{\rm top}(G)=0$.

\medskip

Equivalently, a planar ribbon graph is one which can be embedded in the Euclidean plane $\RR^2$ without crossings in such a way that the
ribbon structure on $G$ is induced by the natural counterclockwise orientation\footnote{There are two ways to orient $\RR^2$, and for most of this paper we will implicitly or explicitly work with the counterclockwise orientation.  However, for planar duality it is important to consider both orientations.} on $\RR^2$.

\medskip

Every closed orientable surface $S$ can be cut along a collection of loops to give a polygon $P$, and conversely by identifying certain pairs of sides of $P$ one can recover the surface $S$.
In fact, as is well-known (see e.g. \cite[p.~126]{Henle}), one can arrange for the labeling of the edges of $P$ as one traverses the perimeter counterclockwise to have the form
\begin{equation} \label{eq:fund_poly}
P = a_1 b_1 a_1^{-1} b_1^{-1} \cdots a_g b_g a_g^{-1} b_g^{-1}.
\end{equation}
(In this labeling, $a_i$ and $a_i^{-1}$ get glued together with opposite orientations, and similarly for $b_i$ and $b_i^{-1}$.)
We define the {\em genus} of $P$ to be the integer $g$, which is also the genus of $S$.

\medskip

In particular (by avoiding the vertices), every ribbon graph $G$ can be drawn inside a {\em fundamental polygon} $P$ whose boundary is glued as in (\ref{eq:fund_poly}), with all vertices of $G$ lying on the interior of $P$ (see Figure~\ref{fig:T1-T2} for an example).  One may take the genus of $P$ to be the topological genus of $G$.

\subsection{Planar duality}
\label{sec:planar_duality_background}

It is well-known that every plane graph has a {\em planar dual} $G^\star$ whose vertices correspond to faces of $G^\star$ and whose edges are dual to edges of $G$.
Duality for plane graphs has the following well-known properties:

\begin{enumerate}
\item There is a canonical isomorphism $G^{\star\star} \cong G$.
\item There is a canonical bijection $\psi : S(G) \to S(G^\star)$ sending a spanning tree $T$ of $G$ to the spanning tree $T^*$ whose edges are dual to the edges of $G$ {\em not} in $T$.
\item There is a isomorphism of groups $\Psi = \Psi_{\mathcal O}: \Pic^0(G) \isomap \Pic^0(G^\star)$ depending on the choice of an orientation ${\mathcal O}$ of the plane.
\end{enumerate}

The isomorphism in (3) is obtained as follows.
The choice of ${\mathcal O}$ allows us to identify directed edges of $G$ with directed edges of $G^\star$ in a natural way: if $\vec{e}$ is a directed
edge of $G$ then locally, near the crossing of $e$ and $e^\star$, one obtains an orientation on $e^\star$ by rotating in the direction 
{\em opposite} from ${\mathcal O}$; this convention is needed in order to make the diagram in Theorem~\ref{theorem:planar_duality} commute.
(So if, for example, the plane is oriented counterclockwise for $G$, then one gets from $\vec{e}$ to $(\vec{e})^\star$ by a clockwise rotation.)
This identification affords an isomorphism $\psi$ from the lattice $C_I$ of integral $1$-chains on $G$ to the lattice $C_I^\star$ of integral $1$-chains on $G^\star$.
By \cite[Proposition 8]{BdlHN}, there is also a canonical isomorphism between the {\em lattice of integer flows} $Z_I$ for $G$ and the
{\em lattice of integer cuts} $B^\star_I$ for $G^\star$, and vice-versa.
And by \cite[Proposition 28.2]{Biggs97}, there is a canonical isomorphism $\Pic^0(G)\isomap \frac{C_I}{Z_I \oplus B_I}$ whose inverse is induced by the boundary map $\partial : C_I \to \Div^0(G)$.
We thus obtain an isomorphism $\Psi  : \Pic^0(G) \isomap \Pic^0(G^\star)$ induced by the composition
\[
\Pic^0(G)\isomap \frac{C_I}{Z_I \oplus B_I} \isomap \frac{C_I^\star}{Z^\star_I \oplus B^\star_I} \isomap \Pic^0(G^\star).
\]

\medskip

If $G$ is a planar ribbon graph (with respect to some orientation ${\mathcal O}$ of the plane), the natural way to define a dual planar ribbon graph $G^\star$ is to use the {\em opposite} orientation ${\mathcal O}^{\rm op}$ to define the cyclic ordering around each vertex of the dual graph.  With this convention, facts (1)-(3) above also hold for planar ribbon graphs (with the map in (3) now being canonical).

\subsection{Rotor-routing}
\label{sec:rotor-routing}
We give a quick summary of some basic facts about rotor-routing from \cite{Holroyd-et-al} and \cite{CCG} which will be needed for our proof of
Theorem~\ref{theorem:bernardi_rotor-routing_comparison}.

\medskip

Let $G$ be a ribbon graph, and choose a {\em sink vertex} $y$ of $G$.  The {\em rotor-routing model} is a deterministic process on {\em states} $(\rho,x)$, where $\rho$ is a {\em rotor configuration} (i.e., an assignment of an outgoing edge $\rho[z]$ to each vertex $z \neq y$ of $G$) and $x$ is a vertex of $G$, which one thinks of as the position of a {\em chip} which moves along the graph.  Each step of the rotor-routing process consists of replacing $(\rho,x)$ with a new state $(\rho',x')$, where $\rho'$ is obtained from $\rho$ by rotating the rotor $\rho[x]$ to the next edge $\widetilde{{\rho}[x]}$ in the cyclic order at $x$ and $x'$ is the other endpoint of $\widetilde{{\rho}[x]}$.
We think of the chip as moving from $x$ to $x'$ along the edge $\widetilde{{\rho}[x]}$ in the process.

\medskip

Given a root vertex $y$, a vertex $x$, and a spanning tree $T$, one defines a new spanning tree $\left( (x)-(y) \right)_y(T)$ as follows.
Orienting the edges of $T$ towards $y$ gives a rotor configuration $\rho_T$ on $G$.
Place a chip at the initial vertex $x$ and iterate the rotor-routing process starting with the pair $(\rho,x)$ until the chip first reaches $y$ (which it always does, see \cite[Lemma 3.6]{Holroyd-et-al}).  Call the resulting pair $(\sigma,y)$.  Denote the pairs at each step of the rotor-routing process
by $(\rho_0,x_0)=(\rho,x),(\rho_1,x_1),\ldots,(\rho_k,x_k)=(\sigma,y)$.
Define $S_0 = T$, and for $i=0,\ldots,k-1$ define a subset $S_{i+1} \subset E(G)$ by $S_{i+1} = S_i \backslash \rho_i[x_i] \cup \widetilde{{\rho_i}[x_i]}$.  Although it is not in general true that each $S_{i+1}$ is a spanning tree\footnote{In general, $S_{i+1}$ will either be a spanning tree or the 
union of a unicycle $C'$ and a tree $T'$ such that $C' \cup T'$ contains every vertex of $G$.}, it is proved in \cite{Holroyd-et-al} that $S_k$ is a spanning tree $T'$,
and we define $\left( (x)-(y) \right)_y(T) = T'$. (See Figure~\ref{fig:Rotor_routing} for an example.)
It is proved in \cite{Holroyd-et-al} that this action extends by linearity to an action of $\Div^0(G)$ on $S(G)$, which by {\em loc.~cit.~} is trivial on $\Prin(G)$ and descends to a simply transitive action $r_y$ of $\Pic^0(G)$ on $S(G)$.

\begin{figure}
\centering
\includegraphics[width=.7\textwidth]{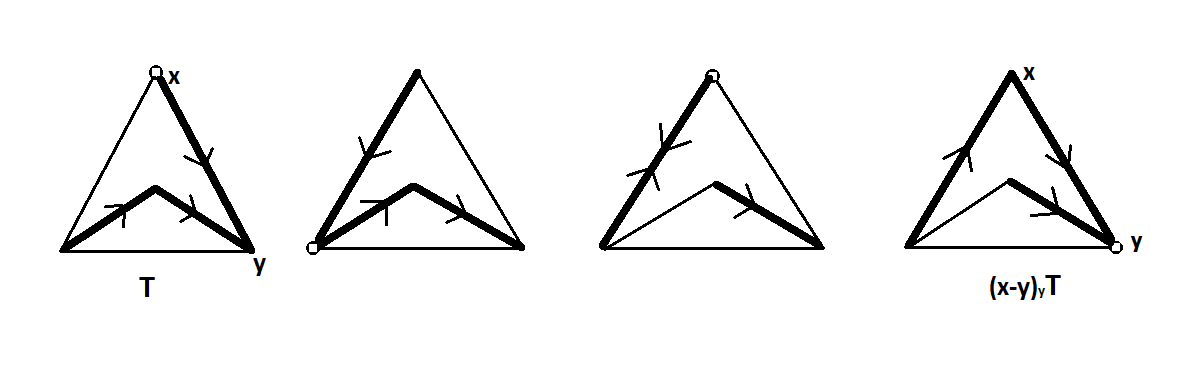}
\caption{An example of the rotor-routing process.}
\label{fig:Rotor_routing}
\end{figure}

\medskip

The following result is proved in \cite{CCG}:

\begin{theorem} \label{thm: planar_rotor-routing}
The action $r_y$ is independent of the root vertex $y$ if and only if the ribbon graph $G$ is planar.
\end{theorem}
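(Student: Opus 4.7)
The plan is to prove the two implications separately, using rather different techniques for each direction, since they reflect genuinely different phenomena.

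For the direction ``planar ribbon graph $\implies r_y$ independent of $y$,'' I would first reduce to a minimal statement. Since $G$ is connected, any two candidate sinks $y,y'$ can be joined by a path, so it suffices to prove $r_y=r_{y'}$ when $y$ and $y'$ are joined by an edge. Moreover, $\Pic^0(G)$ is generated as a group by the classes of divisors of the form $(x)-(z)$ where $x,z$ are adjacent vertices, so by $\ZZ$-linearity it suffices to check that the spanning trees $((x)-(z))_y(T)$ and $((x)-(z))_{y'}(T)$ agree for every spanning tree $T$ and every edge $xz$. The plan is then to fix a planar embedding realizing the ribbon structure and to compare the two rotor-routing trajectories face-by-face, showing that wherever they diverge locally, the symmetric difference of their edge sets is bounded by the boundary of a single face and produces the same resulting spanning tree. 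The counterclockwise rotor rule is exactly the condition needed for this face-by-face cancellation to go through; in non-planar ribbon graphs one cannot globally match up faces in this way.

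For the converse direction, ``$r_y$ independent of $y \implies G$ planar,'' I would argue contrapositively and aim to exhibit a concrete obstruction. Suppose the ribbon structure on $G$ yields an embedding into a surface $S$ of positive topological genus. The plan is to locate a nonseparating simple closed curve on $S$ and an edge $e=xz$ whose associated $1$-chain represents a nontrivial homology class, and then to compare the rotor trajectories of a chip moving under the divisor $(x)-(z)$ for two sinks $y,y'$ lying on opposite sides of a non-contractible loop. The mismatch in homology should translate into a mismatch of the resulting spanning trees via an appropriate ``winding number'' invariant of the induced rotor configuration. In practice, the cleanest route is to reduce to a minimal non-planar ribbon graph (say $K_5$ or $K_{3,3}$ with an embedding of genus $1$) and verify the failure of independence by direct computation, and then argue that any non-planar ribbon graph contains such a configuration in a way that is preserved by the relevant deletion/contraction operations on rotor-routing.

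The main obstacle is the converse direction: one must translate the global topological obstruction to planarity into a local, combinatorial statement about which trees arise as $r_y$-orbits of a fixed tree. The forward direction is essentially a careful but routine exchange argument once the face-by-face viewpoint is set up; the subtlety lies entirely in the counterclockwise convention being compatible with the planar face structure. I expect to spend most of the effort designing the right invariant (for instance, a $\Pic^0(G)$-valued ``monodromy'' of rotor-routing as the sink is moved around a loop) so that the non-planarity assumption forces this invariant to be nonzero; verifying nonvanishing on a minimal example and propagating the conclusion through graph minors is the delicate step.
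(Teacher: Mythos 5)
First, a point of reference: the paper does not actually prove Theorem~\ref{thm: planar_rotor-routing}; it is quoted from \cite{CCG}, and the surrounding text only records the ingredients of that proof (the equivalence relation on unicycles, the notion of a \emph{reversible} directed cycle, and the fact that a ribbon graph is planar if and only if every directed cycle in it is reversible). So you are not competing with an in-paper argument, but your proposal, as written, has genuine gaps in both directions. In the forward direction, the reduction to adjacent sinks $y,y'$ and to generators $(x)-(z)$ of $\Pic^0(G)$ is fine, but the ``face-by-face cancellation'' is not an argument: a rotor-routing trajectory is a global object (the chip can wind around the graph many times before reaching the sink), and there is no reason the two trajectories should differ only within the boundary of a single face. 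The mechanism that actually makes the planar case work is cycle reversibility: moving the sink across an edge changes the rotor-routing computation by reversals of directed unicycles, and in a planar ribbon graph every directed cycle is reversible. That lemma is the content your plan is missing, and it is not a ``routine exchange argument.''

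The converse direction rests on a misreading of what ``non-planar'' means here. A ribbon graph is non-planar when the \emph{embedding determined by the cyclic orderings} has positive genus; the underlying abstract graph may perfectly well be planar. For instance, the theta graph (two vertices joined by three parallel edges) has a ribbon structure with one boundary component, hence topological genus $1$, even though it is a planar graph with no $K_5$ or $K_{3,3}$ minor. Consequently the proposed reduction to $K_5$ or $K_{3,3}$ cannot work --- there is no Kuratowski-type forbidden-minor characterization in play --- and the closing step, ``propagate the conclusion through graph minors,'' presupposes a compatibility of the rotor-routing action with deletion and contraction that is neither stated nor available in the form you would need. The correct obstruction is again reversibility: positive topological genus is equivalent to the existence of a non-reversible directed cycle, and such a cycle is precisely what witnesses $r_y \neq r_{y'}$ for suitably chosen $y, y'$. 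Your instinct to seek a homological ``monodromy of the sink'' invariant points in a reasonable direction, but without the unicycle and reversibility machinery of \cite{CCG} (or a worked-out substitute for it) the plan does not close in either direction.
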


An important ingredient in the proof of Theorem~\ref{thm: planar_rotor-routing} is the relationship between rotor-routing and unicycles.  A {\em unicycle} is a state $(\rho,x)$ such that $\rho$ contains exactly one directed cycle $C(\rho)$ and $x$ lies on this cycle.  

\medskip

Suppose $G$ has $m$ edges and $(\rho,x)$ is a unicycle on $G$.  By \cite[Lemma 4.9]{Holroyd-et-al}\footnote{The authors of \cite{Holroyd-et-al} define rotor-routing slightly differently than we do here: instead of using a configuration with a sink, they use ``sink-free rotor routing''.  One can easily translate back and forth between our process and theirs by adding an outgoing edge to the sink; this makes the configuration a unicycle instead of a spanning tree. During any stage of the process, sink-free rotor-routing takes unicycles to unicycles \cite[Lemma 3.3]{Holroyd-et-al}.}, if we iterate the rotor-routing process $2m$ times starting at the state $(\rho,x)$, the chip traverses each edge of $G$ exactly once in each direction, each rotor makes exactly one full turn, and the stopping state is $(\rho,x)$.
Using this, one sees that the relation on unicycles defined by $(\rho,x) \sim (\rho',x')$ iff $(\rho',x')$ can be obtained from $(\rho,x)$ by iterating the rotor-routing process some number of times is an equivalence relation.

\medskip

Given a unicycle $(\rho,x)$, denote by $\bar{\rho}$ the configuration obtained from $\rho$ by reversing the edges of $C$ and keeping all other rotors the same.  One says that the unicycle $(\rho,x)$ is {\em reversible} if $(\rho,x) \sim (\bar{\rho},x)$.  By \cite[Proposition 7]{CCG}, the notion of reversibility is intrinsic to the directed cycle $C(\rho)$, and does not actually depend on $\rho$ or $x$; in other words, if $(x,\rho)$ and $(x',\rho')$ are unicycles with $C(\rho)=C(\rho')$, then $(x,\rho)$ is reversible iff $(x',\rho')$ is.  It therefore makes sense to talk about reversibility of directed cycles in a ribbon graph $G$.  The importance of this concept stems from \cite[Proposition 9]{CCG}, which asserts that the ribbon graph $G$ is planar if and only if every directed cycle of $G$ is reversible.

\subsection{The Bernardi process}
\label{sec:BernardiProcess}

Let $G$ be a ribbon graph, and fix a pair $(v,e)$ (which we refer to as the {\em initial data}) consisting of a vertex $v$ and an edge $e$ adjacent to $v$.
In this section, we recall the {\em tour} of $G$ which Bernardi \cite[\S{3.1}]{Bernardi} associates to the initial data $(v,e)$ together with a spanning tree $T$, and describe how to associate a break divisor to this tour.

\medskip

Let $T$ be a spanning tree of $G$.  The tour $\tau_{(v,e)}(T)$ is a traversal of
$T$ which begins and ends at $v$.  Informally, the tour is obtained by walking
along edges belonging to $T$ and cutting through edges not belonging to $T$,
beginning with $e$ and proceeding according to the ribbon structure.\footnote{From a topological point of view, the tour $\tau_{(v,e)}(T)$ is obtained by traversing the boundary of a small $\epsilon$-neighborhood of $T$ in the oriented surface $S$ on which the ribbon graph is embedded.}  (See Figure~\ref{fig:bernardi_tour}.)

\begin{figure}
\centering
\includegraphics[width=.4\textwidth]{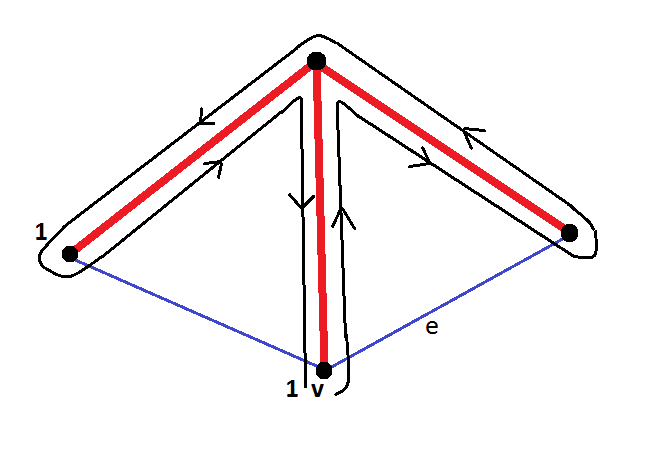}
\includegraphics[width=.4\textwidth]{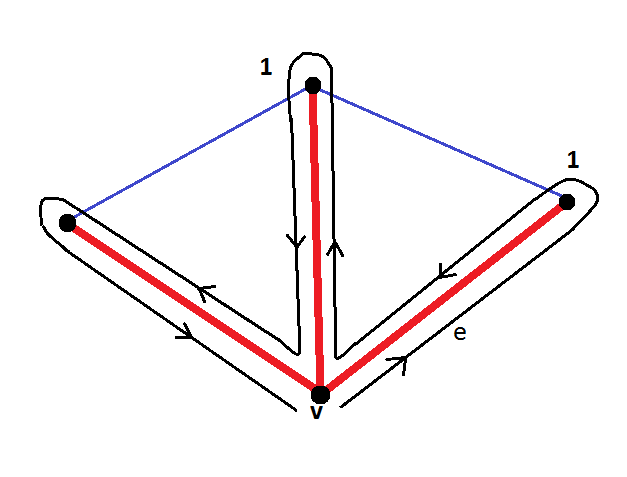}
\caption{The Bernardi tour associated to two different spanning trees (shown in red),
  along with the corresponding break divisors.  The ribbon structure on $G$ is
  induced by the counterclockwise orientation of the plane.}
  \label{fig:bernardi_tour}
\end{figure}

\medskip

More formally, the tour is a sequence
\[
\tau_{(v,e)}(T) = (v_0,\vec{e}_1,v_1,\vec{e}_2,\ldots,\vec{e}_k,v_k)
\]
where each $v_i$ is a vertex of $G$ and $\vec{e}_i$ is a directed edge of $G$ leading to $v_i$.
We set $v_0 = v$.  If $e \in T$, we define $e_1$ to be $e$, and if $e \not\in T$, we let $e_1$ be the first edge after $e$ in the cyclic ordering around $v$.
Let $v_1$ be the other endpoint (besides $v_0$) of $e_1$.  
The $(v_i,e_i)$ for $2 \leq i \leq k$ are defined inductively by declaring that
$e_i = (v_{i-1},v_i)$ is the first edge after $e_{i-1}$ belonging to $T$ in the cyclic ordering of the edges around $v_{i-1}$.
The tour stops when each edge of $T$ has been included twice among the $e_i$ with $1 \leq i \leq k$, once with each orientation; necessarily we will have $v_k = v$.
Note that different choices of initial data give rise to tours which are cyclic shifts of one another.

\medskip

The break divisor $\beta_{(v,e)}(T)$ associated to the tour is obtained by dropping a chip at the corresponding vertex each time the tour first cuts through an edge not belonging to $T$.
In other words, for each edge $e'$ not in the spanning tree, let $\{ \vec{e}_i, \vec{e}_j \}$ be the two oriented edges whose underlying unoriented edge is $e'$, with $i < j$, and set $\eta_{v,e}(e') := v_{i-1}$.  We define
\[
\beta_{(v,e)}(T) := \sum_{e' \not\in T} (\eta_{v,e}(e')).
\]

\medskip

The amazing fact implicitly discovered by Bernardi is that the association $T \mapsto \beta_{(v,e)}(T)$ gives a {\em bijection} between spanning trees of $G$ and break divisors.\footnote{Bernardi phrases his result (Theorem 41(5)) in terms of out-degree sequences of orientations, but in view of the results of \cite{ABKS} the two points of view are equivalent.}
In the next section, we give a proof which is different from Bernardi's that $\beta_{(v,e)}$ is bijective.
In addition to making the present paper more self-contained, our proof of Bernardi's theorem involves a new recursive procedure which might be of independent interest.
However, the reader already familiar with Bernardi's proof of Theorem 41(5) (which in particular makes use of his Propositions 18 and 34) may safely skip the next section
if desired.

\section{The Bernardi map is bijective}
\label{sec:bijective}

Let $S(G)$ denote the set of spanning trees of a graph $G$, and let $B(G)$ denote the set of break divisors of $G$, which is canonically isomorphic to $\Pic^g(G)$.
As in the previous section, we fix a pair $(v,e)$ consisting of a vertex $v$ and an edge $e$ adjacent to $v$.
In this section we give a new proof of the fact that the map $\beta := \beta_{(v,e)} : S(G) \to B(G)$ is bijective by explicitly constructing an inverse map 
$\alpha$.

\subsection{Definition of the inverse map}

Let $D$ be a break divisor and fix a pair $(v,e)$ consisting of a vertex $v$ and an edge $e$ adjacent to $v$.
To define a map $\alpha: B(G) \to S(G)$ which is inverse to $\beta$, we will inductively construct a spanning tree $T$ with $\beta(T)=D$, along with a corresponding Bernardi tour which traverses $T$.  Since the tour is obtained by walking along edges belonging to $T$ and cutting through edges not belonging to $T$, but in this case we don't know $T$, our challenge is to use the break divisor $D$ to figure out which edges to walk along and which to cut through.
The solution is that we will cut through an edge $e'$ if removing that edge from the graph and subtracting a chip from $D$ at the current vertex gives a break divisor $D'$ on the resulting (connected) graph $G'$; otherwise we walk along $e'$ and add it to the spanning tree which we're building.

More formally, $\alpha(D)$ is defined to be the output of Algorithm~\ref{RightBernardiInverse} below.

\begin{algorithm}
\caption{Inverse to the Bernardi map}
\KwIn{A connected graph $G$ and a break divisor $D\in \Div^g(G)$.}
\KwOut{A spanning tree $T$.}
\BlankLine
Set $i:=0$, $T := \emptyset$,$v' := v$, and $e' := e$.
\BlankLine
\While{$T \neq G$}
{Let $w'$ be the other endpoint (besides $v'$) of $e'$. \\
Let $G'$ be the graph obtained from $G$ by deleting $e'$. \\ Define $D' := D - (v') \in \Div^{g-1}(G')$. \\
\eIf{$G'$ is connected and $e' \not\in T$ and $D'$ is a break divisor on $G'$}{Replace $G$ by $G'$, $D$ by $D'$, and $e'$ by the edge following it in the (induced) cyclic ordering around $v'$ on $G'$.}{Replace $T$ by $T \cup \{ e' \}$, $v'$ by $w'$, and $e'$ by the edge following it in the cyclic ordering around $w'$ on $G'$.}
}
\BlankLine
Output $T$.
\label{RightBernardiInverse}
\end{algorithm}

\begin{proposition}
\label{prop:rightinverse}
Let $G$ be a graph and $D$ a break divisor on $G$.  Then $\alpha(D)$ is a spanning tree of $G$ and $\beta(\alpha(D)) = D$.
\end{proposition}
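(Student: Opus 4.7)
The plan is to prove Proposition~\ref{prop:rightinverse} by strong induction on $|E(G)|$, with $n = |V(G)|$ held fixed. To keep track of the underlying graph, I will write $\beta^{H}_{(v',e')}$ for the Bernardi map on a graph $H$ with initial data $(v',e')$. The base case is $|E(G)| = n-1$: then $G$ is already a tree, the only break divisor is $0$, and cutting any edge would disconnect $G$, so the algorithm walks every edge and outputs $T = G$ with $\beta(T) = 0 = D$.

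\emph{Case A (cut).} The conditions $G' := G \setminus e$ connected and $D' := D - (v)$ a break divisor on $G'$ both hold, and the algorithm recurses on $(G', D')$ with initial data $(v, e_1)$, where $e_1$ is the next edge after $e$ around $v$ in $G'$. By induction it returns a spanning tree $T$ of $G'$, hence of $G$, with $\beta^{G'}_{(v,e_1)}(T) = D'$. The entire case reduces to the compatibility identity, valid for every spanning tree $T$ of $G$ with $e \notin T$:
\[
\beta^{G}_{(v,e)}(T) \;=\; \beta^{G \setminus e}_{(v,e_1)}(T) \,+\, (v).
\]
I will prove this by tracking the two Bernardi tours dart by dart. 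The tour on $G$ opens by cutting through $e$ at $v$ (contributing the chip $(v)$) and then enters the cyclic order at $v$ starting from $e_1$; from that point on, its trajectory matches the tour on $G \setminus e$ with initial data $(v, e_1)$, apart from a single extra crossing of $e$ later at $w$ (the other endpoint) which contributes no chip. All other non-tree edges generate identical chip placements in the two tours, and the identity follows; combined with the recursion, $\beta^{G}_{(v,e)}(T) = D' + (v) = D$.

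\emph{Case B (walk).} Some cut condition fails. I first show that every spanning tree $T^{*}$ with $\beta(T^{*}) = D$ necessarily contains $e$: if $e$ is a bridge this is immediate, and otherwise the identity above would force $D - (v)$ to be a break divisor on $G \setminus e$, contradicting our assumption. To apply the induction I pass to the contracted graph $G/e$, whose cyclic order at the merged vertex $u$ is obtained by splicing the cyclic orders at $v$ and $w$ at the position of $e$; the splice places the algorithm's new current edge $e''$ (the next edge around $w$ after $e$) immediately after $u$'s starting point, so initial data $(u, e'')$ is natural on $G/e$. Define $D_{c}$ on $G/e$ by combining the chips at $v$ and $w$ onto $u$; for any $T^{*}$ as above, the tree $T^{*}/e$ exhibits $D_{c}$ as a break divisor of $G/e$. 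By induction on $|E(G/e)| = |E(G)| - 1$, the algorithm run on $(G/e, D_{c})$ with initial data $(u, e'')$ outputs a spanning tree $T_{c}$ with $\beta^{G/e}_{(u,e'')}(T_{c}) = D_{c}$. Then $T := T_{c} \cup \{e\}$ is a spanning tree of $G$, and a contraction analogue of the compatibility identity above gives $\beta^{G}_{(v,e)}(T) = D$.

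\emph{Main obstacle.} The delicate step is justifying that the algorithm's execution on $G$ starting from the post-walk state $(G, D, \{e\}, w, e'')$ really coincides, iteration by iteration, with its execution on $(G/e, D_{c}, \emptyset, u, e'')$, so that the inductive output on $G/e$ is indeed what the algorithm on $G$ produces. Connectivity transfers easily: since $e$ remains in $G$ as a $v$--$w$ path, $G \setminus f$ is connected iff $(G/e) \setminus f$ is, for any $f \neq e$. The break-divisor test is the real subtlety; I will show that $D - (x)$ is a break divisor on $G \setminus f$ if and only if $D_{c} - (x_{c})$ is a break divisor on $(G/e) \setminus f$, using the bijection between spanning trees of $G \setminus f$ containing $e$ and spanning trees of $(G/e) \setminus f$ together with the compatibility of break-divisor data under contraction. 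Once this equivalence is in place, the cut-versus-walk decisions on $G$ and on $G/e$ match at every step and the induction closes.
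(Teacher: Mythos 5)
Your approach is genuinely different from the paper's: the paper runs the algorithm once and argues directly that no cycle can ever be created (by examining the first cycle that would appear and deriving a contradiction from the fact that the intermediate divisor is still a break divisor on the intermediate graph), whereas you set up a deletion--contraction induction. Your Case A is essentially sound: after a cut, the remaining execution really is a fresh run of the algorithm on $(G\setminus e,\,D-(v))$ with initial data $(v,e_1)$, and the identity $\beta^{G}_{(v,e)}(T)=\beta^{G\setminus e}_{(v,e_1)}(T)+(v)$ for $e\notin T$ does follow from matching the two tours dart by dart.

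Case B, however, contains a genuine gap, and it sits exactly where you flagged the ``main obstacle.'' The equivalence you promise to prove --- $D-(x)$ is a break divisor on $G\setminus f$ if and only if $D_c-(x_c)$ is a break divisor on $(G/e)\setminus f$ --- is false as a general statement about break divisors and contraction. Take $H$ with vertices $v,w,z$ and edges $e=vw$ together with two parallel edges $h_1,h_2$ joining $v$ and $z$. Every spanning tree of $H$ contains $e$, the unique non-tree edge is $h_1$ or $h_2$ with endpoints $v$ and $z$, so the break divisors of $H$ are exactly $(v)$ and $(z)$; in particular $(w)$ is not one. Contracting $e$ yields a doubled edge between $u$ and $z$, on which the image $(u)$ of $(w)$ \emph{is} a break divisor. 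The spanning-tree bijection you cite gives only the forward implication (a witness tree containing $e$ contracts to a witness tree); the converse fails because a chip at $u$ can be explained by a non-tree edge incident to $v$ even though the corresponding chip upstairs sits at $w$. Worse, the contracted test on $D_c-(u)$ cannot distinguish whether the algorithm on $G$ is currently standing at $v$ or at $w$, while the uncontracted test $D-(x)$ does. So the iteration-by-iteration matching of the two executions is not established; you would need to prove the equivalence for the particular states actually reachable by the algorithm, which is a substantive claim requiring its own argument. Two further (more easily repaired) defects: your justification that $D_c$ is a break divisor on $G/e$ invokes a spanning tree $T^{*}$ with $\beta(T^{*})=D$, whose existence is precisely the surjectivity you are trying to prove --- you should instead start from a tree compatible with $D$ and swap $e$ into it along its fundamental cycle; and contraction can create loops (whenever $e$ has a parallel edge), which the paper's framework excludes, so the inductive hypothesis does not directly apply to $G/e$ without first extending all the definitions to graphs with loops.
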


\begin{proof}
The proof is by induction on $|E(G)|$.  For the inductive step, given a ribbon graph $G$ and an edge $e=vv'$, we will need to endow the deletion $G\backslash e$ and contraction $G/e$ with the structure of ribbon graphs.  
For $G \backslash e$, we just remove $e$ from the cyclic ordering around $v$ and $v'$.  
In $G/e$, $v$ and $v'$ collapse to a single vertex $v''$, and if the cyclic ordering around $v$ is $e_1=e,\ldots,e_s$ and around $v'$ is $e_1'=e,\ldots,e_t'$ then the cyclic ordering around $v''$ is $e_2,\ldots,e_s,e_2',\ldots,e_t'$.

Starting with a break divisor $D$ on $G$ and initial data $(v,e)$, let $v'$ be the other endpoint of $e$.
If $D-(v)$ is a break divisor on $G \backslash e$, set $G'' := G\backslash e$ and $D'' := D-(v)$.  
Otherwise set $G'' := G/e$ and $D''(w) := D(w)$ for $w \neq v,v'$, $D''(v''): = D(v)+D(v')$.  

We claim that in either case, $D''$ is a break divisor on $G''$.
This is clear if $G'' = G \backslash e$, so we may assume that $G'' = G/e$.  
Let $T$ be a spanning tree of $G$ compatible with $D$.  If $e \in T$, then $T/e$ is a spanning tree of $G/e$ compatible with $D''$.  
If $e \not\in T$, then because $D-(v)$ is not a break divisor on $G \backslash e$, $e$ sends its chip to $v'$ rather than $v$.  Because of this, if $e'$ is an edge of $T$ incident to $v'$, $T' := \left( T \backslash \{ e \}\right)  \cup \{ e' \}$ is a spanning tree of $G$ compatible with $D$, and $T'/e$ is a spanning tree of $T/e$ compatible with $D''$. This proves the claim.

We now proceed with the inductive argument.  
There are two base cases to check, when $G$ is a loop and when $G$ is an edge, both of which are trivial.
By induction on $|E(G)|$ and the claim, $\alpha(D'')$ is a spanning tree $T''$ of $G''$, which corresponds naturally to a spanning tree $T$ of $G$.  
(If $G'' = G \backslash e$, set $T = T''$, and if $G'' = G/e$ set $T=T'' \cup \{ e \}$.)
One checks easily from the definitions that $\alpha(D) = T$ and $\beta(T) = D$.  Thus $\alpha$ is well-defined and $\beta \circ \alpha$ is the identity map on the set of break divisors. 
\end{proof}

\begin{remark}
There is an efficient (polynomial-time) algorithm for deciding whether or not a given divisor on a graph is a break divisor; see \cite{Backman}.
\end{remark}

\begin{corollary}
\label{cor:BernardiSurjective}
The Bernardi map $\beta : S(G) \to B(G)$ is surjective.
\end{corollary}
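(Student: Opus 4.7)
The plan is to deduce the corollary directly from Proposition~\ref{prop:rightinverse}, which has just been proved. Surjectivity of $\beta : S(G) \to B(G)$ amounts to the statement that every break divisor $D \in B(G)$ lies in the image of $\beta$. First, I would let $D \in B(G)$ be an arbitrary break divisor. Second, I would apply Proposition~\ref{prop:rightinverse} to obtain the spanning tree $T := \alpha_R(D)$ produced by Algorithm~\ref{RightBernardiInverse}; the proposition guarantees both that $T$ is indeed a spanning tree of $G$ and that $\beta(T) = D$. Since $D$ was arbitrary, this exhibits every element of $B(G)$ as a value of $\beta$, so $\beta$ is surjective.

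There is essentially no obstacle to overcome at this stage: the substantive content of the argument — namely, that Algorithm~\ref{RightBernardiInverse} terminates, that its output is a bona fide spanning tree (the non-cycling argument using break-divisor compatibility), and that the composition $\beta \circ \alpha_R$ recovers the input divisor — has already been packaged into Proposition~\ref{prop:rightinverse}. The corollary is therefore a purely formal one-line consequence of having constructed a set-theoretic right inverse to $\beta$.

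Looking ahead, the natural continuation of the paper will be to establish the complementary statement that $\beta$ is also injective, presumably by constructing a left inverse $\alpha_L$ by a dual algorithm and then observing that, since $S(G)$ and $B(G)$ have the same finite cardinality (the number of spanning trees equals the number of break divisors by the Matrix-Tree Theorem combined with Theorem~2.1), the surjection $\beta$ is automatically a bijection and $\alpha_L = \alpha_R$ are both two-sided inverses. The present corollary is the first and easier half of that pincer.
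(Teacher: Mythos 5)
Your proof is correct and is exactly the paper's (implicit) argument: the corollary is an immediate consequence of Proposition~\ref{prop:rightinverse}, since $\beta(\alpha_R(D)) = D$ exhibits every break divisor as a value of $\beta$. The paper leaves this step unwritten precisely because it is the formal one-liner you describe.
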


By Theorem~\ref{theorem:ABKS4.25} (Theorem~4.25 from \cite{ABKS}), we know that $|B(G)| = |S(G)|$, and thus Corollary~\ref{cor:BernardiSurjective}
implies that $\beta$ is an isomorphism and $\alpha$ is its inverse.  In the proof of the following result,
we argue directly that $\alpha$ is a left inverse to $\beta$ without using Theorem~\ref{theorem:ABKS4.25}.  

\begin{proposition}
\label{prop:leftinverse}
If $T$ is a spanning tree of $G$ then $\alpha(\beta(T)) = T$.
\end{proposition}

\begin{proof}
Let $D = \beta(T)$.  If $e \not\in T$ then $D - (v)$ is a break divisor on $G \backslash e$, and the result follows by induction on $|E(G)|$ as in the proof of Proposition~\ref{prop:rightinverse} (applying the inductive hypothesis to $G \backslash e$).  
If $D - (v)$ is not a break divisor on $G \backslash e$, then by the claim in the proof of Proposition~\ref{prop:rightinverse} 
the divisor $D''$ defined there is a break divisor on $G / e$.  If $e \in T$, we may then apply induction to $G / e$ and the result again follows.

Therefore it suffices to prove that the potentially troublesome case where $e \in T$ and $D - (v)$ is a break divisor on $G \backslash e$ does not actually occur.
Suppose for the sake of contradiction that it does.  Let $A$ be the connected component of $v$ in $T \backslash \{ e \}$, and let $G' = G[A]$ be
the corresponding induced subgraph of $G$.  Let $g'$ be the genus of $G'$, let $T'$ be the spanning tree of $G'$ given by the restriction of $T$, and let
$D'$ be the restriction of $D$ to $G'$.

Let $e'$ be the first edge in the cyclic ordering around $v$, starting with $e$, which belongs to $G'$, and let $\beta'$ be the Bernardi process on $G'$ with initial data $(v,e')$.  
Note that the Bernardi process $\beta$ on $G$ first walks along $e$, then tours along the vertices in the complement of $A$, then walks along $e$ again, and finishes with a tour of the vertices in $A$.  In addition, every edge in $G \backslash T$ which connects $A$ to its complement is crossed
before the tour of the vertices in $A$ begins.
It follows that $\beta'(T') = D'$.

In particular, $D'$ is a break divisor on $G'$ so the degree of $D'$ is equal to $g'$.
Therefore the restriction of $D-(v)$ to $G'$ has degree $g' - 1$.  
However, since $G'$ is a connected subgraph of $G \backslash e$, Lemma~\ref{lemma:eulerchar}
contradicts the assumption that $D-(v)$ is a break divisor on $G \backslash e$.
\end{proof}

\begin{corollary} \label{cor:BernardiBijective}
The Bernardi map $\beta : S(G) \to B(G)$ is bijective.
\end{corollary}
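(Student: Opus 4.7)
The plan is to deduce the corollary immediately by combining the two propositions that were just established in this section. Concretely, I would argue as follows: Proposition~\ref{prop:leftinverse}(2) tells us that $\alpha_L \circ \beta = \mathrm{id}_{S(G)}$, so $\beta$ admits a left inverse and is therefore injective. Corollary~\ref{cor:BernardiSurjective}, which is itself immediate from Proposition~\ref{prop:rightinverse}, tells us that $\beta$ is surjective. A map that is both injective and surjective is a bijection, and this gives the result.

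Since there is no real content beyond citing the two propositions, the written proof would be essentially one or two sentences. The main obstacle has already been overcome in the previous subsections, namely verifying that Algorithms~\ref{RightBernardiInverse} and \ref{LeftBernardiInverse} actually output spanning trees (handled via the cycle-freeness argument inside Proposition~\ref{prop:rightinverse}) and that the left-hand algorithm really recovers the input tree of the Bernardi tour (handled via the component-by-component induction in Proposition~\ref{prop:leftinverse}). Once those are in hand, the bijectivity statement is purely formal.

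As a small coda after stating the corollary, I would record the observation already flagged in the section's introduction: once $\beta$ is known to be a bijection, standard set theory forces $\alpha_L = \alpha_R = \beta^{-1}$. This is worth noting because Algorithms~\ref{RightBernardiInverse} and \ref{LeftBernardiInverse} look genuinely different at first sight (they traverse the cyclic order in opposite directions and subtract a chip at opposite endpoints of the edge under consideration), so the equality of their outputs is an \emph{a posteriori} nontrivial consequence of Bernardi's theorem rather than something visible step-by-step.
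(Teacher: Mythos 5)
Your argument matches the paper's proof exactly: injectivity from Proposition~\ref{prop:leftinverse}(2) via the left inverse, surjectivity from Corollary~\ref{cor:BernardiSurjective}, and the \emph{a posteriori} equality $\alpha_L = \alpha_R = \beta^{-1}$ noted as a consequence. Nothing to add.
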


\begin{remark}
The proofs of Propositions~\ref{prop:rightinverse} and \ref{prop:leftinverse} show that the map $\gamma : B(G) \to B(G \backslash e) \dotcup B(G / e)$ sending $D$ to $D-(v) \in B(G \backslash e)$ if $D-(v)$ is a break divisor on $G \backslash e$ and to $D'' \in B(G\backslash e)$ otherwise, where $D''(w) := D(w)$ for $w \neq v,v'$ and $D''(v''): = D(v)+D(v')$, is bijective.  In particular, $|B(G)| = |B(G \backslash e)| + |B(G / e)|$ for every $e \in E(G)$.
Since $|S(G)|$ satisfies the same recurrence, with the same initial values when $|E(G)|=1$, this provides another way to see that $|S(G)| = |B(G)|$.  
\end{remark}

\begin{remark}
The proof of Corollary~\ref{cor:BernardiBijective}, combined with the results of \cite{ABKS} and \cite{Backman}, provides another `efficient bijective proof'\footnote{By an {\em efficient bijective proof}, we mean (in this context) a bijection between $\Pic^0(G)$ and the set of spanning trees of $G$ which is efficiently computable in both directions.} of Kirchhoff's Matrix-Tree Theorem in the spirit of \cite{Baker-Shokrieh}, as well as a new algorithm for choosing a uniformly random spanning tree of $G$.\footnote{The results of \cite{Backman} can be used to prove that the inverse of the natural map $B(G) \to Pic^g(G)$ is efficiently computable.  See \cite{Baker-Shokrieh} for an explanation of how such a bijection can be used to find random spanning trees.}
\end{remark}

\section{The Bernardi torsor}
\label{sec:Bernardi_torsor}

In this section, we show how to associate a simply transitive action $\beta_v$ of $\Pic^0(G)$ on $S(G)$ to a pair $(G,v)$ consisting of a ribbon graph $G$ and a vertex $v$ of $G$.
This comes down to showing that two Bernardi bijections $\beta_{(v,e)}$ and $\beta_{(v,e')}$ associated to the same root vertex differ merely by translation by some element of 
$\Pic^0(G)$.
For a divisor $D \in B(G)$, we write $[D]$ for the linear equivalence class of $D$ in $\Pic^g(G)$.
Recall that $\Pic^0(G)$ acts simply and transitively on $\Pic^g(G)$ by addition, and that the map $D \mapsto [D]$ gives a canonical bijection from $B(G)$ to $\Pic^g(G)$.   From this we get a canonical simply transitive action of $\Pic^0(G)$ on $B(G)$ sending $D$ to the unique break divisor $\gamma \cdot D$ linearly equivalent to $D + \gamma$.

\begin{theorem} \label{theorem:independent_of_e}
Let $v$ be a vertex of $G$.
\begin{enumerate}
\item Let $e_1,e_2$ be edges incident to $v$, and
let $\beta_1 = \beta_{(v,e_1)}$ and $\beta_2 = \beta_{(v,e_2)}$ be the corresponding Bernardi bijections.
Then there exists an element $\gamma_0 \in \Pic^0(G)$ such that $\beta_2(T) = \gamma_0 \cdot \beta_1(T)$ for all $T \in S(G)$.
\item The action $\beta_v : \Pic^0(G) \times S(G) \to S(G)$ defined by $\gamma \cdot T := \beta_{(v,e)}^{-1} \left( \gamma \cdot \beta_{(v,e)}(T) \right)$ for any edge $e$ incident to $v$, depends only on $v$ and not on the choice of $e$.
\end{enumerate}
\end{theorem}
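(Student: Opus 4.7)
The plan is to observe first that part (2) is a formal consequence of part (1): if $\beta_{(v,e_1)}$ and $\beta_{(v,e_2)}$ differ by translation by a fixed $\gamma_0\in\Pic^0(G)$, then for any $\gamma\in\Pic^0(G)$ the elements $\beta_{(v,e_i)}^{-1}\bigl(\gamma\cdot\beta_{(v,e_i)}(T)\bigr)$ for $i=1,2$ coincide, because $\Pic^0(G)$ is abelian and $\gamma_0$ cancels with its inverse. I therefore concentrate on (1). By a telescoping argument along a chain of pairwise-consecutive edges around $v$, it suffices to handle the case where $e_2$ is the immediate successor of $e_1$ in the cyclic order at $v$.

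For such consecutive $e_1,e_2$ and any spanning tree $T$, both Bernardi tours traverse the same cyclic sequence of edge-events (each directed edge occurring once, as a walk along a tree edge or a cut across a non-tree edge), and the tour at $(v,e_2)$ is obtained from the one at $(v,e_1)$ by rotating past an initial segment $\sigma$. For each non-tree edge $e'$, the break-divisor chip sits at the source of the first of its two cut-events in the tour; so this chip differs between the two tours precisely when exactly one cut of $e'$ lies in $\sigma$, and then it moves from one endpoint of $e'$ to the other. My goal is to show that, with $v_1$ denoting the endpoint of $e_1$ opposite $v$, the net chip-displacement satisfies
\[
\beta_{(v,e_2)}(T) - \beta_{(v,e_1)}(T)\;\sim\;(v_1) - (v)\pmod{\Prin(G)},
\]
independently of $T$; then $\gamma_0=[(v_1)-(v)]\in\Pic^0(G)$ works for the consecutive pair, and the general case follows by summing.

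The verification splits into two subcases. If $e_1\notin T$, the segment $\sigma$ consists of the single cut-event of $e_1$ from $v$, so only the chip attached to $e_1$ switches; it moves from $v$ to $v_1$, giving the difference exactly $(v_1)-(v)$. If $e_1\in T$, the segment $\sigma$ walks out along $e_1$, tours the subtree $T_1\subset T$ that is the component of $T\setminus e_1$ containing $v_1$, and walks back along $e_1$; the non-tree cut-events occurring in $\sigma$ are precisely those whose source lies in $V(T_1)$. Hence the non-tree edges whose chips switch are exactly the non-tree edges of the cut $C=C\bigl(V(T_1),V(G)\setminus V(T_1)\bigr)$, and each contributes (endpoint outside $V(T_1)$) minus (endpoint inside $V(T_1)$) to the difference. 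Letting $\chi_{T_1}$ denote the characteristic function of $V(T_1)$, the principal divisor $-\Delta\chi_{T_1}$ equals the analogous sum over the \emph{entire} cut $C$, which differs from the non-tree part by the contribution of the tree edge $e_1\in C$, namely $(v)-(v_1)$; subtracting this term yields the desired congruence. The main obstacle is this last identity in the case $e_1\in T$; the key insight is to recognize the set of switched non-tree edges as the non-tree part of a combinatorial cut and invoke the principal divisor attached to the indicator of $V(T_1)$.
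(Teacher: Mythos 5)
Your proposal is correct, and it reaches the conclusion by a genuinely different decomposition than the paper's. The paper fixes the two edges $e_1,e_2$ (arbitrary, not necessarily consecutive) and varies the spanning tree instead: it derives the closed formula~(\ref{eq:betaTdiff}) for $\beta_1(T)-\beta_2(T)$ in terms of the partition of $V(G)\setminus\{v\}$ induced by deleting $v$ from $T$, and then proves this difference is constant in $\Pic^0(G)$ by walking along the (connected) tree-exchange graph and checking case by case that for adjacent trees the discrepancy $\delta_{T,T'}$ equals $0$ or $\pm\Delta\chi$ for an explicit indicator $\chi$. You instead fix $T$ and vary the edge, telescoping around the cyclic order at $v$ to reduce to consecutive $e_1,e_2$, where the two tours differ by rotating past the segment $\sigma$ that processes $e_1$; the chips that move are exactly those on non-tree edges with precisely one cut-event in $\sigma$, which you correctly identify as the non-tree part of the edge cut determined by $V(T_1)$, so that $\Delta\chi_{V(T_1)}$ together with the single tree-edge correction $(v)-(v_1)$ yields $\beta_{(v,e_2)}(T)-\beta_{(v,e_1)}(T)\sim (v_1)-(v)$, manifestly independent of $T$ (and the case $e_1\notin T$ gives this on the nose). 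Your route buys an explicit formula for the translation class $\gamma_0$ as a telescoped sum of classes $[(w_f)-(v)]$, and it avoids both the matroid-connectivity input and the paper's admittedly ``tedious but straightforward'' case analysis; the paper's route produces the global formula~(\ref{eq:betaTdiff}), whose $A/B$-partition bookkeeping is of the same flavor as computations reappearing in the planarity and duality sections. Both arguments turn on the same key mechanism: the displaced chips form (the non-tree part of) a cut, hence a principal divisor up to one tree-edge term. Your derivation of part (2) from part (1) via commutativity of $\Pic^0(G)$ is identical to the paper's.
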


\begin{proof}
Assuming (1) for the moment, we verify that (2) holds.  We need to prove that if $\beta_1 = \beta_{(v,e_1)}$ and $\beta_2 = \beta_{(v,e_2)}$ are as in (1),
then
\[
\beta_1^{-1} \left( \gamma \cdot \beta_1(T) \right) = \beta_2^{-1} \left( \gamma \cdot \beta_2(T) \right).
\]
To see this, observe that (1) implies $\beta_2^{-1}(x) = \beta_1^{-1}(\gamma_0^{-1} \cdot x)$ for all $x \in \Pic^g(G)$.  Thus
\[
\begin{aligned}
\beta_2^{-1} \left( \gamma \cdot \beta_2(T) \right) &= \beta_1^{-1}\left( \gamma_0^{-1} \gamma \gamma_0 \cdot \beta_1(T) \right) \\
&= \beta_1^{-1} \left( \gamma \cdot \beta_1(T) \right) \\
\end{aligned}
\]
as claimed. (Note that we use in a crucial way the fact that $\Pic^0(G)$ is abelian.)

\medskip

For (1), it suffices to prove that $\beta_1(T)-\beta_2(T)$ and $\beta_1(T')-\beta_2(T')$ are linearly equivalent in $\Div^0(G)$ for any two spanning trees $T,T'$ of $G$.  To do this we first derive a useful formula for $\beta_1(T)-\beta_2(T)$.  By definition, we have
\begin{equation*}
\beta_1(T)-\beta_2(T) = \sum_{f \not\in T} \delta(f),
\end{equation*}
where $\delta(f) := \eta_{(v,e_1)}(f) - \eta_{(v,e_2)}(f)$ (considered as a divisor on $G$).
Thus it will suffice to find a formula for $\delta(f)$ when $f\not\in T$.

\medskip

Let the cyclic ordering of the edges around $v$, starting with $e_1$, be
\[
(e_1, a_1,\ldots, a_k, e_2, b_1,\ldots,b_\ell).
\]
Let $I = \{ e_1,a_1,\ldots,a_k \}$ and $J = \{ e_2,b_1,\ldots,b_\ell \}$.
Removing $v$ from $T$ partitions the set $V(G) \backslash \{ v \}$ into disjoint sets $A$ and $B$, where $A$ (resp. $B$) is the union of all vertices lying in the same connected component of $T \backslash v$ as some edge in $I$ (resp. $J$).  See Figure~\ref{fig:1-3}..

\medskip

The Bernardi tours $\tau_{(v,e_1)}(T)$ and $\tau_{(v,e_2)}(T)$ are cyclic shifts of each other, the difference
being that $\tau_{(v,e_1)}(T)$ traverses the $A$-components of $T$ followed by the $B$-components, while the reverse is true for
$\tau_{(v,e_2)}(T)$.  This shows that $\delta(f) = 0$ (i.e., the Bernardi tours $\tau_{(v,e_1)}(T)$ and $\tau_{(v,e_2)}(T)$ cut through $f$ from the same vertex) when $f \in E(G) \backslash T$ is any one of the following:
\begin{itemize}
  \item A loop edge.
  \item An edge whose endpoints both belong to $A$ or both belong to $B$.
  \item An edge $va \in I$ with $a \in A$, or an edge $vb \in J$ with $b \in B$.
\end{itemize}

On the other hand, the following kind of edges of $G \backslash T$ make a non-trivial contribution to the difference $\delta(f) := \eta_{(v,e_1)}(f) - \eta_{(v,e_2)}(f)$ (considered as a divisor on $G$):

\begin{itemize}
  \item If $f=ab$ with $a \in A$ and $b \in B$ then $\delta(f) = (a) - (b)$.
  \item If $f=va' \in J$ with $a' \in A$ then $\delta(f) = (a') - (v)$.
  \item If $f=vb' \in I$ with $b' \in B$ then $\delta(f) = (v) - (b')$.
\end{itemize}

Summarizing, we obtain the following formula, where the sum is over edges not in $T$:
\begin{equation} \label{eq:betaTdiff}
\beta_1(T)-\beta_2(T) = \sum_{\substack{f=ab \\ a \in A, b \in B}} (a) - (b) + \sum_{\substack{f = va' \in J \\ a' \in A}} (a')-(v) + \sum_{\substack{f=vb' \in I \\ b' \in B}}(v)-(b').
\end{equation}

See Figure~\ref{fig:1-3} for an example.

\begin{figure}
\centering
\includegraphics[width=.45\textwidth]{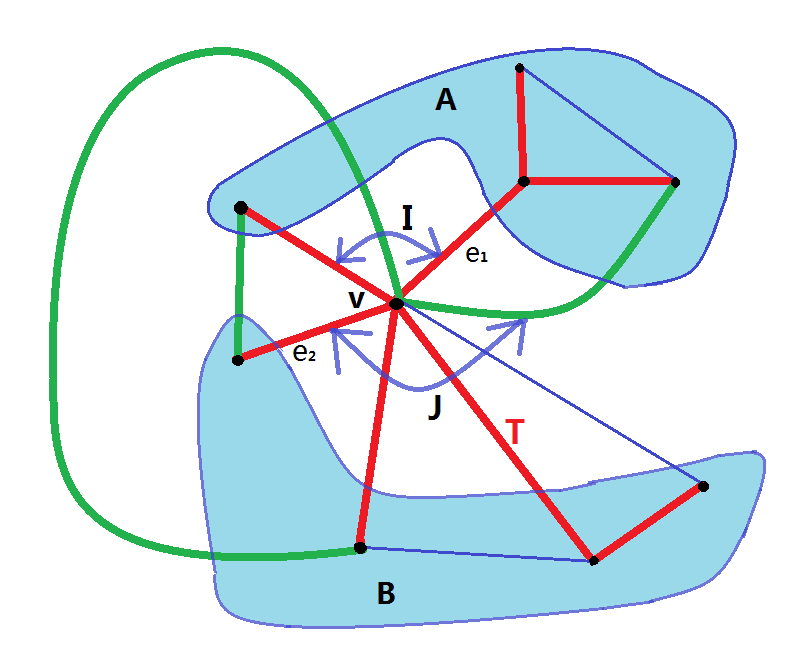}
\\
(a)
\\
\includegraphics[width=.45\textwidth]{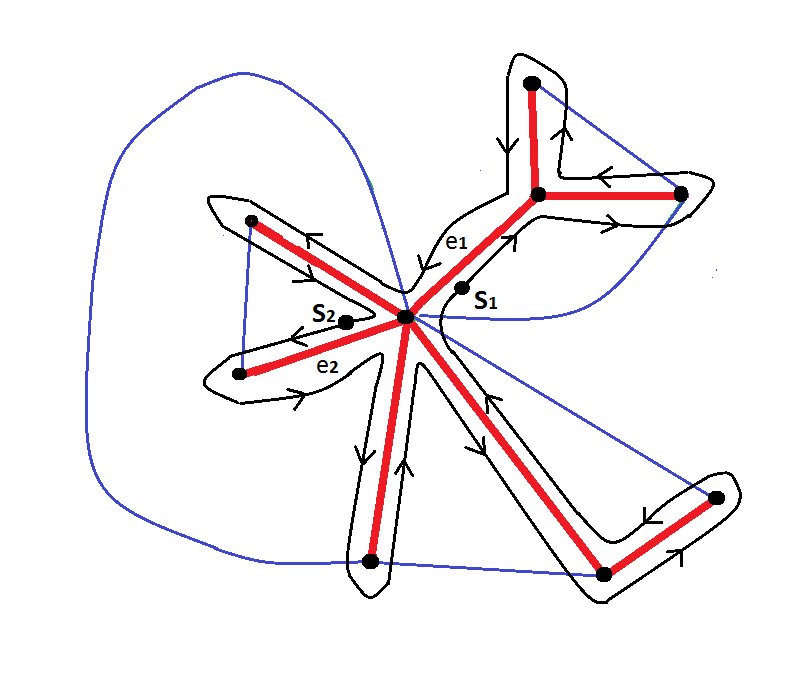}
\\
(b)
\\
\includegraphics[width=.45\textwidth]{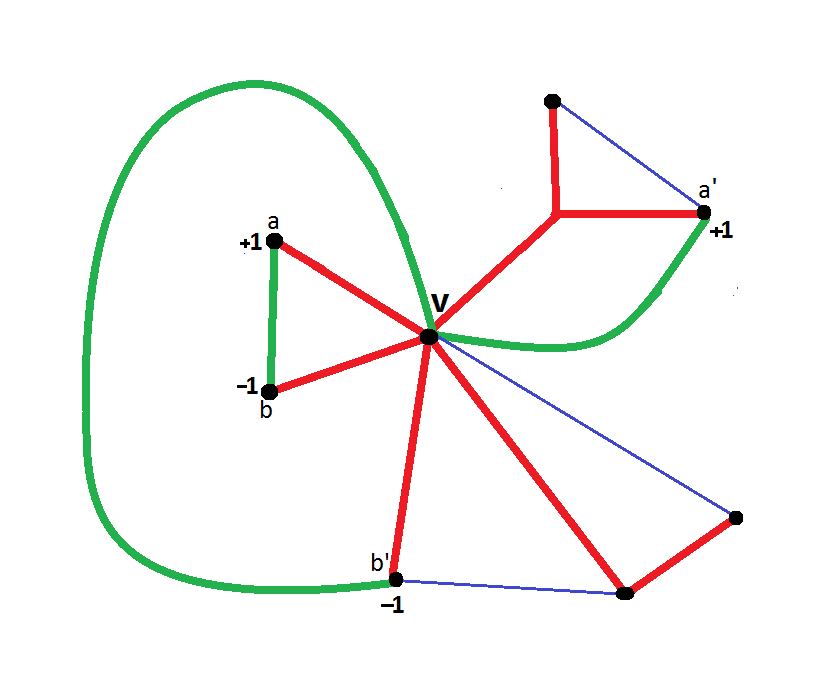}
\\
(c)
\\
\caption{(a) The spanning tree $T$ is shown in red.  The edges which contribute non-trivially to $\beta_1(T)-\beta_2(T)$ are shown in green.  (b) The Bernardi tours $\tau_1 = \tau_{(v,e_1)}(T)$ and $\tau_2 = \tau_{(v,e_2)}(T)$ differ by a cyclic shift: $\tau_1$ begins at $s_1$ and $\tau_2$ begins at $s_2$.  (c) The difference $\beta_1(T) - \beta_2(T)$.}
  \label{fig:1-3}
\end{figure}

\medskip

We now modify the expression in (\ref{eq:betaTdiff}) by firing each vertex in $A$.  More formally, since the characteristic function $\chi_A$ satisfies
\begin{equation*} 
\begin{aligned}
\Delta(\chi_A) &= \sum_{\substack{f=ab \\ a \in A, b \in B}} (a) - (b) + \sum_{\substack{f = va' \\ a' \in A}} (a')-(v) \\
&= \sum_{\substack{f=ab \\ a \in A, b \in B}} (a) - (b) + \sum_{\substack{f = va' \in I \\ a' \in A}} (a')-(v) + \sum_{\substack{f = va' \in J \\ a' \in A}} (a')-(v), \\
\end{aligned}
\end{equation*}

we have
\begin{equation*} 
\beta_1(T)-\beta_2(T) = \Delta(\chi_A) + \sum_{f=vu \in I} (v) - (u),
\end{equation*}
and in particular
\begin{equation} 
\label{eq:betaTdiff2}
\beta_1(T)-\beta_2(T) \sim \sum_{f=vu \in I} (v) - (u).
\end{equation}

Since the right-hand side of (\ref{eq:betaTdiff2}) does not depend on $T$, part (1) of the theorem follows
(with $\gamma_0$ equal to $\sum_{f=vu \in I} (u) - (v)$).
 \end{proof}

\begin{corollary}
If $G$ is a ribbon graph and $v$ is a vertex of $G$, then the action $\beta_v$ defined above makes the set of spanning trees of $G$ into a torsor for
$\Pic^0(G)$.
\end{corollary}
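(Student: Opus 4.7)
The plan is to observe that everything we need has essentially been set up already, and the corollary follows by transporting a simply transitive action along a bijection. I will proceed in three short steps.

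First, I would note that the canonical bijection $B(G) \isomap \Pic^g(G)$ sending $D \mapsto [D]$ transports the simply transitive action of $\Pic^0(G)$ on $\Pic^g(G)$ (given by $\gamma \cdot [D] = [D+\gamma]$) into a simply transitive action of $\Pic^0(G)$ on $B(G)$, sending $D$ to the unique break divisor $\gamma \cdot D$ linearly equivalent to $D + \gamma$. This is exactly the action described in the paragraph immediately preceding Theorem~\ref{theorem:independent_of_e}, and it is a genuine group action because the map $D \mapsto [D]$ is a bijection of sets commuting with the $\Pic^0(G)$-action.

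Second, I would fix any edge $e$ incident to $v$ and use the Bernardi bijection $\beta_{(v,e)} : S(G) \isomap B(G)$ from Corollary~\ref{cor:BernardiBijective} to transport this action once more, obtaining a map
\[
\Pic^0(G) \times S(G) \to S(G), \qquad (\gamma, T) \mapsto \beta_{(v,e)}^{-1}\bigl( \gamma \cdot \beta_{(v,e)}(T) \bigr).
\]
Since $\beta_{(v,e)}$ is a bijection and the $\Pic^0(G)$-action on $B(G)$ is a simply transitive group action, this transported map is automatically a simply transitive group action of $\Pic^0(G)$ on $S(G)$: the identity acts trivially, the action of a product matches the composition of actions, and for each pair $(T, T') \in S(G)^2$ there is a unique $\gamma \in \Pic^0(G)$ with $\gamma \cdot T = T'$, namely the element sending $\beta_{(v,e)}(T)$ to $\beta_{(v,e)}(T')$.

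Third, I would invoke Theorem~\ref{theorem:independent_of_e}(2), which shows that this transported action does not depend on the choice of edge $e$ incident to $v$. Hence it coincides with the action $\beta_v$ defined in the statement of that theorem, and we conclude that $\beta_v$ is a simply transitive action of $\Pic^0(G)$ on $S(G)$, i.e.\ $S(G)$ is a $\Pic^0(G)$-torsor under $\beta_v$. There is no real obstacle here — all the hard work was done in Theorem~\ref{theorem:independent_of_e} (to guarantee well-definedness) and in Corollary~\ref{cor:BernardiBijective} (to guarantee bijectivity) — so the corollary is essentially a formal consequence.
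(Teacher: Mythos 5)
Your proposal is correct and matches the paper's (implicit) reasoning: the corollary is stated without proof precisely because it is the formal consequence of transporting the simply transitive action of $\Pic^0(G)$ on $B(G)$ along the bijection $\beta_{(v,e)}$ of Corollary~\ref{cor:BernardiBijective}, with Theorem~\ref{theorem:independent_of_e}(2) guaranteeing independence of $e$. Nothing further is needed.
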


\section{Planarity and the dependence of the Bernardi torsor on the base vertex}
\label{sec:planar_independence}

Given a ribbon graph $G$, we prove that the action $\beta_v$ defined in the previous section is independent of the vertex $v$ if and only if $G$ is {\em planar}.

\medskip

First, we deal with the case where $G$ is planar:

\begin{theorem} \label{theorem:planar_canonical}
If $G$ is a planar ribbon graph, then the action $\beta_v$ is independent of $v$, and hence defines a {\em canonical} action
$\beta$ of $\Pic^0(G)$ on $S(G)$.
\end{theorem}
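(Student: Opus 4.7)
The plan is to reduce, by connectivity of $G$, to showing that $\beta_v = \beta_{v'}$ for any two adjacent vertices $v,v'$ joined by an edge $a$. By Theorem~\ref{theorem:independent_of_e} I may use the initial edge $a$ at both endpoints, and then the statement reduces to proving, for every spanning tree $T$, the identity
\[
[\beta_{(v,a)}(T)] - [\beta_{(v',a)}(T)] = [(v) - (v')] \text{ in } \Pic^g(G).
\]
If this holds for all $T$, the bijections $\beta_{(v,a)}$ and $\beta_{(v',a)}$ differ by a fixed translation of the torsor $\Pic^g(G)$, so the induced actions $\beta_v$ and $\beta_{v'}$ on $S(G)$ coincide.

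To analyze the difference I would work topologically: fix a planar embedding of $G$ on $S^2$ inducing the given ribbon structure, so that $N_\epsilon(T)$ is a disk and the Bernardi tour traces its boundary circle $\partial N_\epsilon(T)$, starting at the corner specified by the initial data. Planarity enters crucially in the observation that $S^2 \setminus N_\epsilon(T)$ is also a disk, so the non-tree edges of $G$ form a \emph{non-crossing chord diagram} with endpoints on $\partial N_\epsilon(T)$. Both tours $\tau_{(v,a)}(T)$ and $\tau_{(v',a)}(T)$ trace the same cyclic sequence around $\partial N_\epsilon(T)$ in the same direction, only starting at different corners.

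I would split the argument in two cases. If $a \in T$, then removing $a$ splits $T$ into subtrees $T_v, T_{v'}$ with vertex partition $V(G) = V(T_v) \sqcup V(T_{v'})$. The two traversals of $a$ divide each tour into a "$T_{v'}$-half" and a "$T_v$-half", and these appear in opposite order in the two tours. Non-tree edges with both endpoints in one subtree therefore have the same first-cut vertex in both tours; only the non-tree \emph{cross} edges joining $V(T_v)$ and $V(T_{v'})$ have their first-cut vertex swapped between their two endpoints. Orienting each such $\vec{e}'$ from $V(T_v)$ to $V(T_{v'})$ and letting $\vec{C}_T$ denote the full oriented cut (including $\vec{a}$), I get
\[
\beta_{(v,a)}(T) - \beta_{(v',a)}(T) = \partial\bigl(\vec{C}_T - \vec{a}\bigr) = \Delta(\mathbf{1}_{V(T_{v'})}) + (v) - (v') \equiv (v) - (v') \pmod{\Prin(G)},
\]
since cuts have principal boundary. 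If instead $a \notin T$, then $a$ is itself a chord of $\partial N_\epsilon(T)$ dividing the circle into two arcs. The non-crossing property forces every other non-tree edge $e' \neq a$ to have both endpoints on the same arc, so both of its cuts occur consecutively within whichever arc-traversal (first or second) contains them, and shifting the starting endpoint of $a$ does not alter which cut of $e'$ is first. The only chip that moves is that of $a$ itself, from $v$ to $v'$, yielding $\beta_{(v,a)}(T) - \beta_{(v',a)}(T) = (v) - (v')$ on the nose.

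The main obstacle is making precise the topological claim underlying the $a \notin T$ case: one must carefully justify that both Bernardi tours traverse $\partial N_\epsilon(T)$ in the same direction and that the initial corners $(v,a)$ and $(v',a)$ correspond to starting on \emph{opposite} arcs of the chord $a$, so that the non-crossing structure of the chord diagram genuinely rules out any chord $e' \neq a$ having one cut in each arc. This is exactly where planarity plays an essential role and is, by contrast, what allows non-planar ribbon graphs to admit $T$-dependent corrections to the difference, consistent with the subsequent Theorem~\ref{theorem:non-planar_non-canonical}.
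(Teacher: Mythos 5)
Your argument is correct, and the planarity mechanism is the same one the paper uses: compare the two Bernardi tours as cyclic shifts of the boundary circle of a neighborhood of $T$, and exploit the fact that in the planar case the non-tree edges form a non-crossing chord diagram in the complementary disk (the paper phrases this as the inside/outside partition of the non-tree edges relative to the fundamental cycle $C_{T,e_1}$). The one substantive difference is the choice of initial data: you use the same edge $a$ at both endpoints, so the two break divisors differ by the fixed class $[(v)-(v')]$, whereas the paper takes $(v_1,e_1)$ and $(v_2,e_2)$ with $e_2$ the edge \emph{following} $e_1$ in the cyclic order at $v_2$, which makes the two starting corners essentially coincide on the boundary circle and yields the literal equality $\beta_{(v_1,e_1)}(T)=\beta_{(v_2,e_2)}(T)$. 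This normalization swaps which case is trivial: for the paper the case $e_1\in T$ is immediate (the two starting points are separated only by one side of $e_1$, which contains no cut-points), while in your version the case $a\in T$ requires the extra observation that the non-tree part of the oriented cut determined by $T\setminus a$ has boundary equal to $\Delta(\mathbf{1}_{V(T_{v'})})+(v)-(v')$ --- a computation which is correct (and which, as you note implicitly, needs no planarity), but which the paper's choice of $e_2$ avoids entirely. Both choices are legitimate by Theorem~\ref{theorem:independent_of_e}, and your reduction from ``the difference is a fixed translation of $\Pic^g(G)$'' to ``the actions $\beta_v$ and $\beta_{v'}$ agree'' is exactly the commutativity argument already used in the proof of that theorem, so the overall structure is sound.
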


\begin{proof}

Since $G$ is connected by assumption, it suffices to prove that $\beta_{v_1} = \beta_{v_2}$ whenever $v_2$ is a neighbor of $v_1$.
Without loss of generality, we may assume that the ribbon structure corresponds to the counterclockwise orientation of the plane.

\medskip

Let $e_1$ be an edge connecting $v_1$ and $v_2$,
and let $e_2$ be the edge following $e_1$ in the cyclic ordering around $v_2$.  (If ${\rm deg}(v_2)=1$ then we will have $e_1=e_2$.)
By Theorem~\ref{theorem:independent_of_e}, which allows us to pick whichever edges we want in our initial data, it suffices to prove that for each spanning tree $T$ we have $\beta_{(v_1,e_1)}(T)=\beta_{(v_2,e_2)}(T)$.

\medskip

If $e_1 \in T$, then since the Bernardi tours starting with $(v_1,e_1)$ and $(v_2,e_2)$ are cyclic shifts of each other (they coincide other than the fact that the first tour starts with $e_1$ and the second ends with $e_1$), we have $\beta_{(v_1,e_1)}(T)=\beta_{(v_2,e_2)}(T)$.
We may therefore assume that $e_1\not\in T$.
In this case, $T \cup e_1$ contains a unique simple cycle $C = C_{T,e_1}$, called the {\em fundamental cycle} associated to $T$ and $e_1$.
Since $G$ is planar, the edges other than $e_1$ which are not in the spanning tree $T$ can be partitioned into two disjoint subsets: the edges $E_{\rm in}$ lying {\em inside} $C$ and the edges $E_{\rm out}$ lying {\em outside} $C$.

\medskip

The Bernardi process associated to the initial data $(v_1,e_1)$ will cut through $e_1$, then cut through all of the edges in $E_{\rm in}$, then cut through all
the edges in $E_{\rm out}$, touring around $T$ in the process.
The Bernardi process associated to the initial data $(v_2,e_2)$ will cut through all of the edges in $E_{\rm out}$, then cut through $e_1$, then cut through all
the edges in $E_{\rm in}$, touring around $T$ in the process.
(See Figure~\ref{fig:8-9} for an example.)

\begin{figure}
\centering
\includegraphics[width=.4\textwidth]{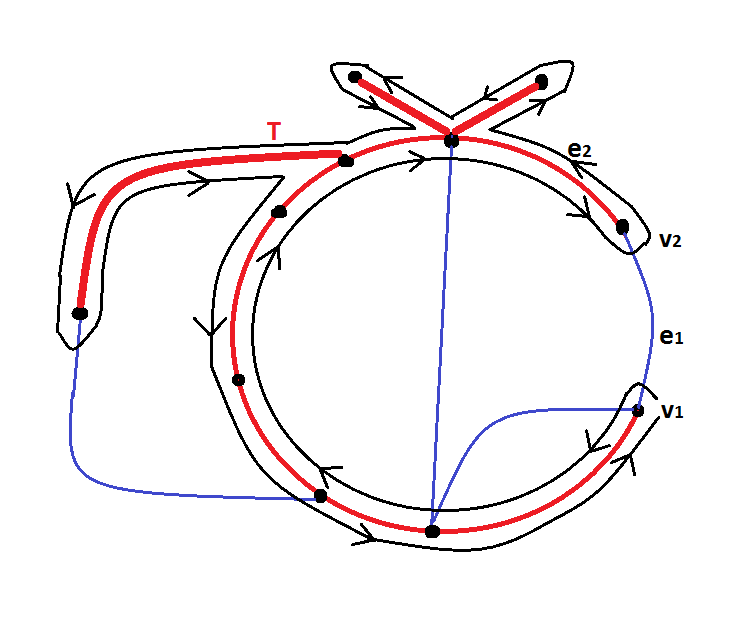}
\includegraphics[width=.4\textwidth]{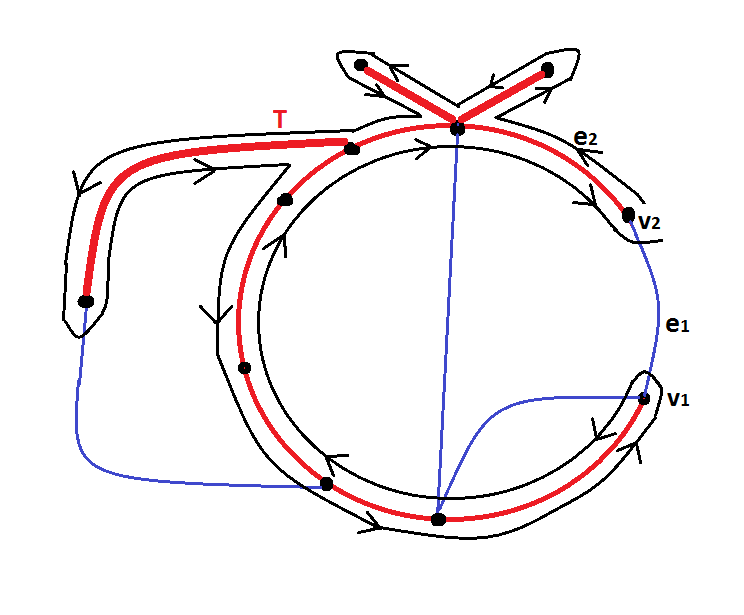}
\caption{The tours associated to $(v_1,e_1)$ and $(v_2,e_2)$, respectively, for the planar graph $G$ and the spanning tree $T$ (in red).}
  \label{fig:8-9}
\end{figure}

It follows that not only are the tours $\tau_{(v_i,e_i)}$ for $i=1,2$ the same up to a cyclic shift, they also cut through edges not in $T$ in exactly the same way.
In particular, $\beta_{(v_1,e_1)}=\beta_{(v_2,e_2)}$.
\end{proof}

\begin{remark}
We conjecture that if $G$ is a planar ribbon graph, the canonical Bernardi bijection between spanning trees of $G$ and break divisors of $G$ is ``geometric'' in the sense of \cite[Remark 4.26]{ABKS}.\footnote{Note added: this conjecture has now been proved by Chi Ho Yuen.}
\end{remark}

Next, we treat the non-planar case.  We begin with a simple lemma:

\begin{lemma}
\label{lemma:cut}
Let $O$ be an acyclic orientation of a connected finite graph $G$, and let $B$ be a non-empty subset of $E(G)$.  Orient each edge in $B$ according to 
$O$.  Let $\partial : C_I \to {\rm Div}^0(G)$ be the natural boundary map, where $C_I$ is the lattice of integer $1$-chains on $G$. 
If the class of $\sum_{\vec{e} \in B} \partial(\vec{e})$ in ${\rm Pic}^0(G)$ is zero, then $B$ is a union of cuts in $G$.
\end{lemma}

\begin{proof}
As mentioned in \S\ref{sec:planar_duality_background}, the map $\partial$ induces an isomorphism 
\[
\frac{C_I}{Z_I \oplus B_I} \isomap {\rm Pic}^0(G).
\]
Thus we can write $c := \sum_{\vec{e} \in B} \vec{e} \in C_I$ as a sum $z + b$ with $z \in Z_I$ and $b \in B_I$.
As the orientation $O$ is acyclic, we must have $z=0$.  Therefore $c=b \in B_I$ is a sum of directed cuts, and in particular $B$ is a union of cuts.
\end{proof}

\begin{theorem} \label{theorem:non-planar_non-canonical}
If $G$ is a non-planar ribbon graph, there are vertices $v,v'$ of $G$ with $\beta_v \neq \beta_{v'}$.
\end{theorem}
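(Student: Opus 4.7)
The plan is to argue by exhibiting, for any non-planar ribbon graph $G$, a pair of vertices $v_1, v_2$ joined by an edge $e$ and a pair of spanning trees $T, T'$ such that
\[
\bigl(\beta_{(v_1, e)}(T) - \beta_{(v_2, e)}(T)\bigr) - \bigl(\beta_{(v_1, e)}(T') - \beta_{(v_2, e)}(T')\bigr) \notin \Prin(G).
\]
By the definition of the actions $\beta_{v_i}$ given in Theorem~\ref{theorem:independent_of_e}, which depend on the base edge only up to translation by a fixed element of $\Pic^0(G)$, this non-principality directly witnesses $\beta_{v_1} \neq \beta_{v_2}$. Writing $\phi(T) := \beta_{(v_1, e)}(T) - \beta_{(v_2, e)}(T)$, the task therefore reduces to producing $T$ and $T'$ with $\phi(T) - \phi(T') \notin \Prin(G)$.

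\medskip

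The first move is to exploit the topology of the ribbon embedding. Since $G$ is non-planar, it embeds (via its ribbon structure) into a closed orientable surface $S$ of positive topological genus, so by standard surface theory $G$ contains a simple cycle $C$ that is non-separating on $S$. Pick any edge $e \in C$ with endpoints $v_1, v_2$, and let $T$ be a spanning tree containing all edges of $C$ except $e$, so that $C = C_{T,e}$ is the fundamental cycle of $T$ and $e$. Let $T' = (T \setminus \{f\}) \cup \{e\}$ for some edge $f \in C$ incident to $v_1$, so that $T'$ is another spanning tree with $C_{T', f} = C$. This is the candidate pair.

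\medskip

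The crux is then to compute $\phi(T)$ and $\phi(T')$ by directly analyzing the Bernardi tours $\tau_{(v_1, e)}$ and $\tau_{(v_2, e)}$ against each of $T$ and $T'$. The computation would follow the same template as the proof of Theorem~\ref{theorem:independent_of_e} (around equation~(\ref{eq:betaTdiff})), but the decisive contrast with the planar proof of Theorem~\ref{theorem:planar_canonical} is the following: in the planar case, $C$ bounded a disk, so the non-tree edges other than $e$ were partitioned into ``inside'' and ``outside'' sets each traversed as a contiguous block by the tour, which forced $\beta_{(v_1, e_1)}(T) = \beta_{(v_2, e_2)}(T)$ literally. In the non-planar case no such partition exists: non-separation of $C$ means that some non-tree edges cross $C$ in a topologically essential way, so that the two tours assign their first cuts to different endpoints of those edges. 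The analogue of (\ref{eq:betaTdiff}) then acquires a non-trivial ``wrap-around'' correction coming from precisely these edges, and the goal is to show this correction makes $\phi(T) - \phi(T')$ a divisor supported on the vertices of $C$ that cannot be written as $\Delta \chi$ for any integer function $\chi$.

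\medskip

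The main obstacle is the last step: verifying non-principality in generality. The prototypical case is the theta graph on two vertices with three parallel edges and a non-planar (toroidal) ribbon structure, where a direct computation (taking $T = \{e_1\}$ and $T' = \{e_2\}$) gives $\phi(T) - \phi(T') = 2(v_2) - 2(v_1)$ while $\Prin(G) = \mathbb{Z} \cdot 3\bigl((v_1) - (v_2)\bigr)$, so non-principality is immediate. For a general non-planar ribbon graph, my expectation is that one localizes the computation along a non-separating cycle $C$ of smallest length, obtaining an analogous explicit formula in which the coefficient of $(v_2) - (v_1)$ is controlled by the parity or count of non-tree edges crossing $C$ and is strictly smaller than any factor that could be realized by a principal divisor supported on $C$. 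Carrying out this counting argument cleanly, and making sure it survives when $C$ interacts with the rest of $G$ in complicated ways, is the hard part; reducing to the theta-graph witness via a ribbon-graph minor/contraction argument that preserves both non-planarity and the relevant Bernardi combinatorics is a natural route but requires care.
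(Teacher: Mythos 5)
Your reduction is correct and matches the paper's: $\beta_{v_1}\neq\beta_{v_2}$ follows from exhibiting spanning trees $T,T'$ with $\phi(T)-\phi(T')\notin\Prin(G)$, where $\phi(T)=\beta_{(v_1,e)}(T)-\beta_{(v_2,e)}(T)$; and your choice of making one of the two trees contain the base edge (so that $\phi(T')=0$ by the cyclic-shift observation) is exactly the device used in the paper. The topological starting point (a cycle that is non-separating on the surface determined by the ribbon structure) is also the right one. But the proposal is not a proof: the decisive step --- showing that the resulting divisor is not \emph{linearly equivalent} to zero --- is left as an explicitly acknowledged expectation (``my expectation is that one localizes the computation\ldots{} is the hard part''). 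Beyond the theta-graph example, you give no argument that works for a general non-planar ribbon graph, and the suggested route via a minor/contraction reduction to the theta graph is not carried out and is not obviously compatible with the Bernardi combinatorics. There is also a technical imprecision earlier: edges of $G$ cannot literally ``cross'' the embedded cycle $C$ (the embedding has no crossings), so the set of edges contributing to the ``wrap-around correction'' is never actually identified; without that, you do not even have a formula for $\phi(T)$ to test for principality.

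For comparison, the paper resolves both difficulties with one device: it draws $G$ in a fundamental polygon $P=a_1b_1a_1^{-1}b_1^{-1}\cdots$ of \emph{minimal} genus and with the \emph{minimal} number of edges crossing $\partial P$. The edges contributing to the difference are then concretely the edges crossing the side $b_1$, and choosing the spanning tree $T_1$ inside the subgraph $G_0$ of edges crossing neither $a_1$ nor $b_1$ (connected by minimality) yields the clean formula $\phi(T_2)=\sum_i\bigl((c_i)-(d_i)\bigr)$. Non-principality is then obtained by a maximum-principle argument: if this divisor were $\Delta f$, the edges $(c_i,d_i)$ leaving a component $H$ of the set where $f$ is maximal would form a cut consisting entirely of $b_1$-crossing edges, and one could redraw $G$ by moving $H$ to reduce the number of crossings of $\partial P$ --- contradicting minimality. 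This extremal argument is the missing idea in your proposal; some substitute for it (or a genuinely worked-out counting argument along your non-separating cycle) is needed before the proof is complete.
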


\begin{proof}
By the discussion in Section~\ref{sec:ribbon_graphs}, $G$ has a polygonal representation inside a fundamental polygon \[
P=a_1 b_1 a_1^{-1} b_1^{-1}\cdots a_g b_g a_g^{-1} b_g^{-1}
\]
which we may assume to have {\em minimal genus} among all such representations.
We may also assume that the drawing of $G$ inside $P$ has the minimum possible number of edges passing through the polygon $P$.

\medskip

Let $a = a_1, b = b_1$.
Since $G$ is non-planar, we may assume that there are edges $e$ and $e'$ of $G$ which intersect boundary edges $a,a^{-1}$ and $b,b^{-1}$ of $P$, respectively.

\medskip

Let $G_0$ be the complement in $G$ of all edges which pass through $a$ or $b$.
Then $G_0$ is connected, since otherwise one could redraw $G$ by changing the relative position of the components of $G_0$ and
obtain a polygonal representation that contradicts the minimality of $G$ and $P$.

\medskip

Since $G_0$ is connected, there exists a spanning tree $T_1$ of $G$ contained in $G_0$.
Let $C = C_{T_1,e}$, let $e^\star$ be an edge of $T_1 \cap C$ (so in particular $e^\star$ does not intersect $a$ or $b$), and let $T_2 = T_1 \cup e \backslash e^\star$.
Without loss of generality, we may orient $C$ and label the endpoints of $e,e^\star$ so that $\vec{e} = (x,y)$ and $(\vec{e})^\star = (x^\star,y^\star)$ are oriented consistently in $C$. Let $e^{\star\star}$ be the edge following $e^\star$ in the cyclic orientation around $y^\star$ and
consider the Bernardi maps $\beta_1$ and $\beta_2$ arising from the initial data $(x^\star,e^\star)$ and $(y^\star,e^{\star\star})$, respectively.

\medskip

Since $e^\star \in T_1$, we know that
\[
\beta_1(T_1)-\beta_2(T_1) = 0.
\]

On the other hand, the Bernardi tours $\tau_{(x^\star,e^\star)}(T_2)$ and $\tau_{(y^\star,e^{\star\star})}(T_2)$ have the property that for each edge $e'$ not in $T_2$,
$\eta_{(x^\star,e^\star)}(e') = \eta_{(y^\star,e^{\star\star})}(e')$ if and only if $e'$ does not pass through $b$ and $b^{-1}$.  Thus
\[
\beta_1(T_2)-\beta_2(T_2) = \sum_{e \in B} \partial e,
\]
where $B$ is the (non-empty) set of edges of $G$ passing through $b$ and $b^{-1}$, oriented so that the head of each edge in $B$ lies on the
path from $x^\star$ to $y^\star$ in the Bernardi tour of spanning tree $T_2$ with initial data $(x^\star,e^\star)$.
(See Figure~\ref{fig:T1-T2} for an example.)

\medskip

Suppose for the sake of contradiction that the divisor $\sum_{e \in B} \partial e$ is linearly equivalent to $0$.
By Lemma~\ref{lemma:cut}, $B$ is a union of cuts.  Thus there is a non-empty connected subgraph $H$ of $G$ with the property that
every edge of $G$ connecting $H$ to its complement is contained in $B$, and in particular passes through $b$ and $b^{-1}$.
But in this case, we can redraw the embedding of $G$ by moving $H$, obtaining a polygonal representation which contradicts the minimality of $G$ and $P$.
\end{proof}

\begin{figure}
\centering
\includegraphics[width=.4\textwidth]{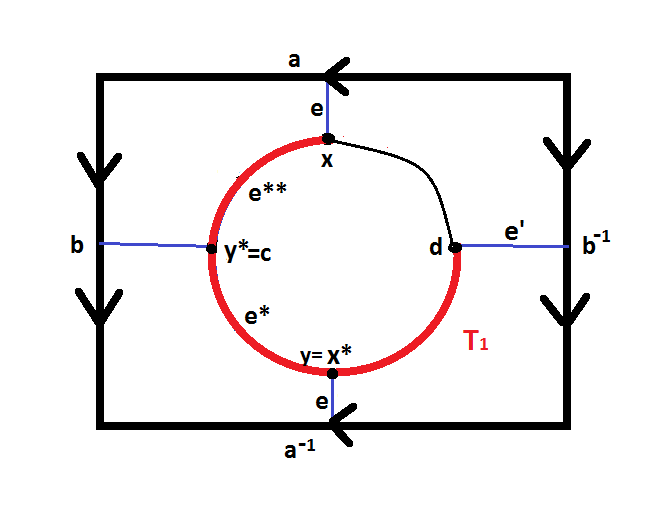}
\includegraphics[width=.4\textwidth]{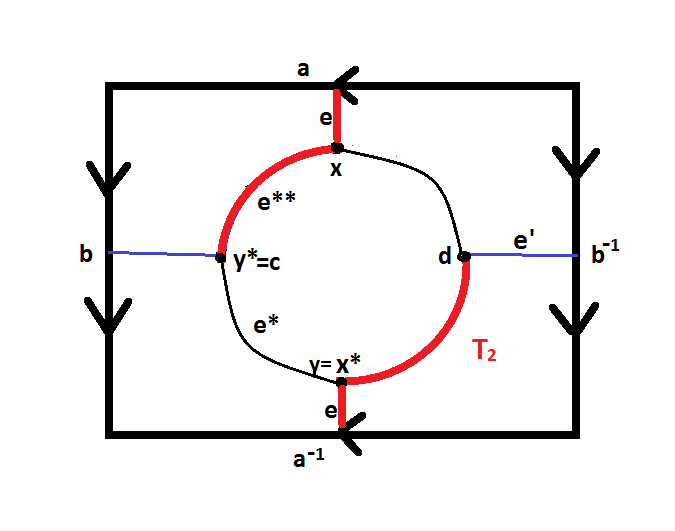}
\caption{An example illustrating the proof of Theorem~\ref{theorem:non-planar_non-canonical}.
In this example, $\beta_1(T_2)-\beta_2(T_2) = (c) - (d)$.}  
\label{fig:T1-T2}
\end{figure}

\section{Compatibility of the Bernardi torsor with planar duality}
\label{sec:planar_duality}

If $G$ is a planar ribbon graph, we show that the natural action of $\Pic^0(G)$ on $S(G)$ is compatible with planar duality:

\begin{theorem} \label{theorem:planar_duality}
Let $G$ be a planar ribbon graph.  Then the natural actions of $\Pic^0(G)$ on $S(G)$ and of $\Pic^0(G^\star)$ on $S(G^\star)$ defined by the Bernardi process are identified with one another via the canonical isomorphism $\Psi : \Pic^0(G) \isomap \Pic^0(G^\star)$ and the canonical bijection $\sigma : S(G) \to S(G^\star)$ defined in Section~\ref{sec:planar_duality_background}.
In other words, the following diagram is commutative:
\[
\begin{CD}
\Pic^0(G) \times S(G) @>>> S(G) \\
@VV{\Psi \times \sigma}V	@VV{\sigma}V \\
\Pic^0(G^\star) \times S(G^\star) @>>> S(G^\star) \\
\end{CD}
\]
\end{theorem}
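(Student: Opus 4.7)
The plan is to reduce the commutativity of the diagram to an identity on divisor class differences for adjacent pairs of spanning trees, and then to verify that identity by an explicit chain-level computation exploiting the topological interpretation of the Bernardi process. Since both sides of the diagram are torsors, commutativity is equivalent to the bijection $\sigma$ intertwining the two actions, i.e.\ $\sigma(\gamma \cdot T) = \Psi(\gamma) \cdot \sigma(T)$ for all $T \in S(G)$ and $\gamma \in \Pic^0(G)$. Unwinding the definitions of the two Bernardi torsor actions, this is equivalent to proving that for every pair $T, T' \in S(G)$,
\[
\Psi\bigl([\beta(T') - \beta(T)]\bigr) \;=\; [\beta^\star(\sigma(T')) - \beta^\star(\sigma(T))] \quad \text{in } \Pic^0(G^\star).
\]
Since the matroid exchange graph $\Sigma(G)$ is connected, as exploited in the proof of Theorem~\ref{theorem:independent_of_e}, it suffices to verify this identity when $T' = (T \setminus \{e\}) \cup \{e'\}$ is obtained from $T$ by a single edge swap with $e \in T$ and $e' \notin T$.

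Next I would set up the topological picture. For a planar ribbon graph embedded in $S^2$, the Bernardi tour of a spanning tree $T$ is the boundary $\partial N(T)$ of a thin tubular neighborhood of $T$. The key observation is that by choosing $N(T^\star) := S^2 \setminus \overline{N(T)}$, which deformation retracts onto $T^\star$, the curves $\partial N(T)$ and $\partial N(T^\star)$ coincide as subsets of $S^2$ but are traversed in opposite orientations. This orientation reversal matches precisely the convention from Section~\ref{sec:planar_duality_background} that the dual ribbon structure uses the reversed orientation of the plane, and provides the geometric dictionary between the Bernardi processes on $G$ and $G^\star$.

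For the edge swap $T' = (T \setminus \{e\}) \cup \{e'\}$, the curves $\partial N(T)$ and $\partial N(T')$ agree outside a small neighborhood of the fundamental cycle $C := C_{T,e'} = C_{T',e}$, which bounds a disk in $S^2$. By carefully tracking how chip placements shift under this local modification of the tour, I would construct an explicit $1$-chain $\vec{c} \in C_I$ supported on the edges of $C$ satisfying $\partial \vec{c} = \beta(T') - \beta(T)$ in $\Div^0(G)$. Planar duality identifies the fundamental cycle $C_{T^\star, e^\star}$ of $G^\star$ with the fundamental cut $B_{T,e}$ of $G$ via edge duality, and an analogous construction produces a $1$-chain $\vec{c}^\star \in C_I^\star$ whose boundary in $G^\star$ equals $\beta^\star(\sigma(T')) - \beta^\star(\sigma(T))$.

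The conclusion will follow by verifying that the canonical edge-duality isomorphism $C_I \isomap C_I^\star$ sends $\vec{c}$ to a representative of the same class as $\vec{c}^\star$ modulo $Z_I^\star \oplus B_I^\star$; since $\Psi$ is induced by this isomorphism via $\Pic^0 \cong C_I / (Z_I \oplus B_I)$, this gives the desired equality of Picard classes. The main obstacle is the explicit construction of the $1$-chain $\vec{c}$: while the topological picture makes it evident that $\beta(T') - \beta(T)$ is localized near $C$, pinning down $\vec{c}$ precisely requires a delicate case analysis of how chips assigned to non-tree edges incident to vertices of $C$---and not merely the swapped edges $e$ and $e'$---can shift when the tree is modified, as is already visible in small examples such as the $4$-cycle with a chord.
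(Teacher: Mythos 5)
Your overall strategy is exactly the one the paper follows: reduce commutativity of the diagram to the identity $\Psi([\beta(T')-\beta(T)]) = [\beta^\star(\sigma(T'))-\beta^\star(\sigma(T))]$, reduce further to a single edge swap using connectivity of the tree-exchange graph, realize each side as the boundary of an explicit integral $1$-chain, and match the chains under the edge-duality isomorphism that induces $\Psi$. The observation that $\partial N(T)$ and $\partial N(T^\star)$ are the same curve in $S^2$ traversed with opposite orientations is a good heuristic for why the dual ribbon structure must use the reversed orientation of the plane, and is consistent with the paper's description of the tour as the boundary of a tubular neighborhood of $T$.

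The problem is that the decisive content of the proof is precisely the step you defer as ``the main obstacle,'' and your description of that step is inaccurate in a way that would derail the computation. Write the swap as $T_2 = T_1 \cup e_1 \setminus e_2$ with $\vec{e}_1=(x_1,y_1)$ and $\vec{e}_2=(x_2,y_2)$ oriented compatibly with the fundamental cycle $C$, and let $A,B$ be the two components of $T_1 \setminus e_2$. The chain whose boundary equals $\beta(T_2)-\beta(T_1)$ is \emph{not} supported on the edges of $C$: the correct formula is
\[
\beta(T_2)-\beta(T_1)=\sum_i\bigl((a_i)-(b_i)\bigr)+(x_2)-(x_1),
\]
where the $(a_i,b_i)$ run over the non-tree edges joining $A$ to $B$ that lie \emph{inside} the disk bounded by $C$; their endpoints can be strictly interior to that disk, hence not on $C$ and not adjacent to $C$, so the certifying chain is $\lambda = \vec{e}_1 + \mu + \nu$ with $\mu$ a directed path along $C$ and $\nu=\sum_i(b_i,a_i)$ a sum over chords interior to $C$. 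Obtaining this formula requires normalizing the initial data to $(x_1,e_1)$ (legitimate by Theorems~\ref{theorem:independent_of_e} and \ref{theorem:planar_canonical}), decomposing the two tours into four arcs inside and outside $C$, and noting that inside-to-outside edges cannot exist while outside-to-outside edges are cut identically by both tours. The dual computation must then be done separately according to whether $C^\star$ inherits the clockwise or counterclockwise orientation before one can verify $\partial(\lambda^\star)=\beta^\star(\sigma(T_2))-\beta^\star(\sigma(T_1))$. Until the chain and its dual are actually written down and matched, what you have is a correct outline of the paper's argument rather than a proof.
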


\begin{proof}
Fix some arbitrary initial data $(v,e)$ and $(v^\star,e^\star)$ for the Bernardi processes on $G$ and $G^\star$, and denote by $\beta : S(G) \to B(G)$ and $\beta^\star : S(G^\star) \to B(G^\star)$ the corresponding Bernardi maps.
We need to prove that for any spanning trees $T_1$ and $T_2$ in $S(G)$ and their corresponding dual spanning trees $T_1^\star$ and $T_2^\star$ in
$S(G^\star)$, we have
\begin{equation} \label{eq:commdiag}
\Psi([\beta(T_2)-\beta(T_1)]) = [\beta^\star(T_2^\star)-\beta^\star(T_1^\star)].
\end{equation}

Without loss of generality, we may assume that $T_2$ is obtained from $T_1$ by adding an edge $e_1 \not\in T_1$ and deleting an edge $e_2 \in T_1$ from the fundamental cycle $C = C_{T_1,e_1}$.  One checks easily that $T_2^\star$ is obtained from $T_1^\star$ by adding $e_2^\star$ and deleting $e_1^\star$.

\medskip

We may also assume without loss of generality that the ribbon structure on $G$
corresponds to the counterclockwise orientation on the plane.  
Orient $e_1$ according to how it is first traversed by the Bernardi tour $\tau(T_2)$, and
orient $e_2$ according to how it is first traversed by the Bernardi tour $\tau(T_1)$.
Let $\vec{e}_1=(x_1,y_1)$ and
$\vec{e}_2=(x_2,y_2)$ be the resulting oriented edges.

\medskip

By Theorem~\ref{theorem:planar_canonical}, we may assume without loss of generality that the initial data for the Bernardi process on $G$ are
$(v,e) = (x_1,e_1)$.

\medskip

Deleting the edge $e_2$ from $T_1$ (or, alternatively, deleting the edge $e_1$ from $T_2$) defines a partition of $V(G)$ into disjoint subsets $A$ and $B$ with $y_1,y_2 \in A$ and $x_1,x_2 \in B$.

\medskip

Let $F$ be the set of oriented edges of $G$ not belonging to $T_1 \cup T_2$ which connect vertices in $B$ to vertices in $A$ and lie
on the {\em inside} of the cycle $C$. 

\medskip

We claim that
\begin{equation} \label{eq:dual_difference}
\beta(T_2)-\beta(T_1)=(y_2)-(x_1) + \sum_{\vec{f} \in F} \partial(\vec{f}).
\end{equation}

\begin{figure}
\centering
\includegraphics[width=.4\textwidth]{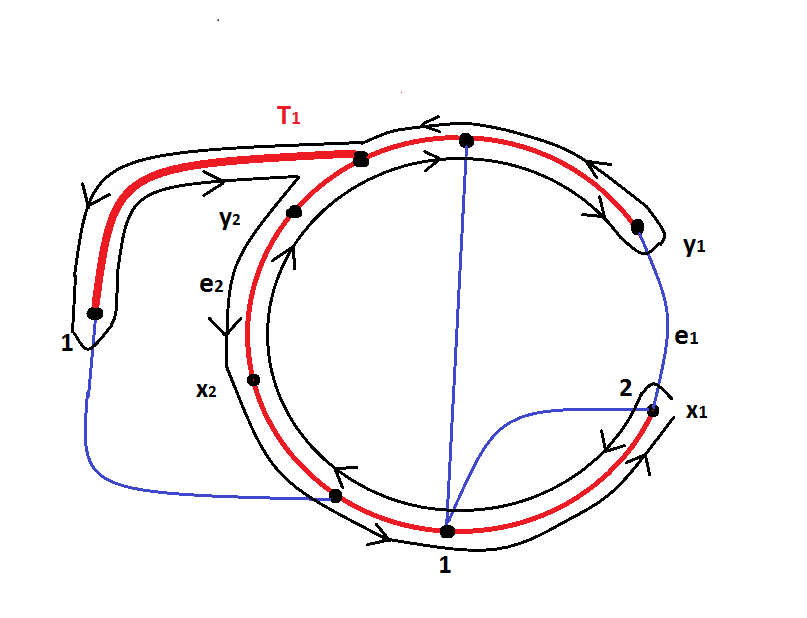}
\\
(a)
\\
\includegraphics[width=.4\textwidth]{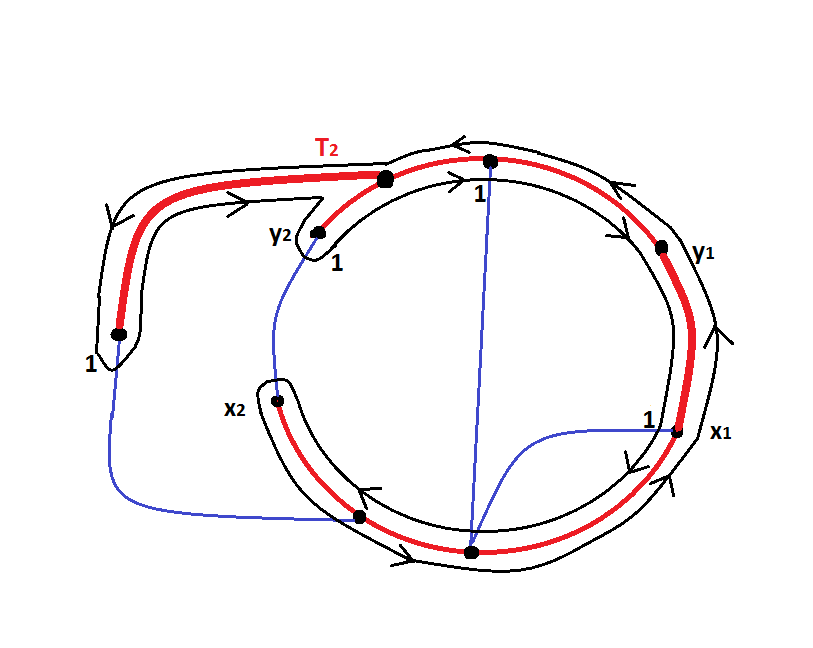}
\\
(b)
\\
\includegraphics[width=.4\textwidth]{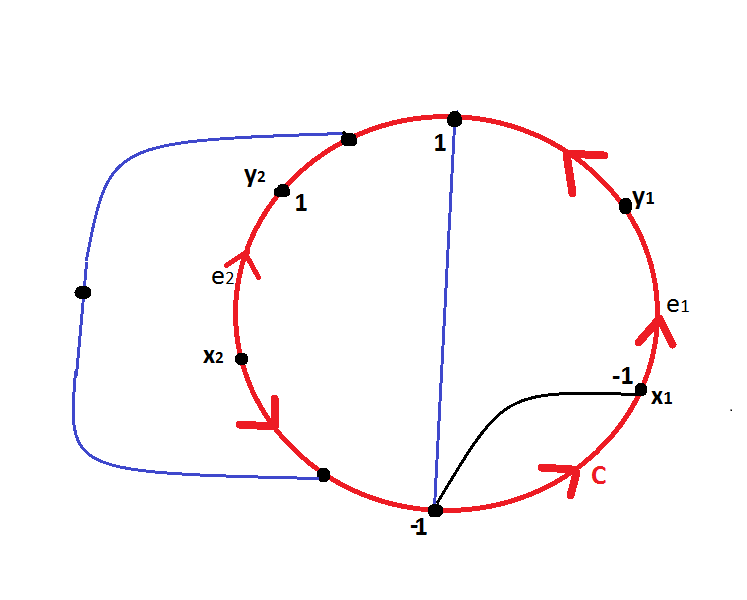}
\\
(c)
\\
\caption{(a),(b): The Bernardi tours $\tau_{(x_1,e_1)}(T_1)$ and $\tau_{(x_1,e_1)}(T_2)$ associated to two
different spanning trees (shown in red), and their associated break divisors. (c): The difference $\beta_{(x_1,e_1)}(T_2)-\beta_{(x_1,e_1)}(T_1)$ between the break divisors associated to $T_2$ and $T_1$.  The fundamental cycle $C = C_{T_1,e_1}$ is shown in red.}
\label{fig:10-11}
\end{figure}

Indeed, we can write the tour $\tau_{(x_1,e_1)}(T_1)$ as 
\[
\tau_{(x_1,e_1)}(T_1) = (\alpha_1,\vec{e}_2,\alpha_2,\alpha_3,(\vec{e_2})^{\rm op},\alpha_4),
\]
where $\alpha_1$ goes from $x_1$ to
$x_2$ inside $C$, $\alpha_2$ goes from $y_2$ to $y_1$ inside $C$, $\alpha_3$ goes from $y_1$ to $y_2$ outside $C$, and $\alpha_4$ goes from $x_2$ to $x_1$ outside $C$.  

Similarly, we can write the tour $\tau_{(x_1,e_1)}(T_2)$ as 
\[
\tau_{(x_1,e_1)}(T_2) =  (\vec{e}_1,\alpha_3,\alpha_2,(\vec{e_1})^{\rm op},\alpha_1,\alpha_4).
\] 
The desired formula (\ref{eq:dual_difference}) follows easily.  
The point here is that there are no edges joining ``inside'' to ``outside'' vertices, and the ``outside-to-outside'' edges, which are cut during $\alpha_3 \cup \alpha_4$, are traversed in the same order in the Bernardi tours associated to $T_1$ and $T_2$.  Thus the difference $\beta(T_2)-\beta(T_1)$ comes from the ``inside-to-inside'' edges joining $A$ (the set of vertices encountered by $\alpha_2$ and $\alpha_3$) and $B$ (the set of vertices encountered by $\alpha_1$ and $\alpha_4$), which are cut during $\alpha_1 \cup \alpha_2$ and are traversed in the opposite order in the Bernardi tours associated to $T_1$ and $T_2$.  See Figure~\ref{fig:10-11} for an example. 

\medskip

We can perform a similar calculation on the dual side.  In this case, recall that the ribbon structure on $G^\star$
corresponds to the {\em clockwise} orientation on the plane.  
Orient $e_1^\star$ according to how it is first traversed by the Bernardi tour $\tau(T_1^\star)$, and
orient $e_2^\star$ according to how it is first traversed by the Bernardi tour $\tau(T_2^\star)$.
Let $(\vec{e}_1)^\star=(x_1^\star,y_1^\star)$ and
$(\vec{e}_2)^\star=(x_2^\star,y_2^\star)$ be the resulting oriented edges.
Let $C^\star$ be the fundamental cycle for $e_2^\star$ with respect to $T_1^\star$, which coincides with the fundamental cycle for 
$e_1^\star$ with respect to $T_2^\star$.

\medskip

By Theorem~\ref{theorem:planar_canonical}, we may assume without loss of generality that the initial data for the Bernardi process on $G^\star$ are
$(v^\star,e^\star) = (x_1^\star,e_1^\star)$.
Deleting the edge $e_1^\star$ from $T_1^\star$ (or, alternatively, deleting the edge $e_2^\star$ from $T_2^\star$) defines a partition of $V(G^\star)$ into disjoint subsets $A^\star$ and $B^\star$ with $y_1^\star,y_2^\star \in A^\star$ and $x_1^\star,x_2^\star \in B^\star$.

\medskip

Let $F^\star$ be the set of oriented edges of $G^\star$ not belonging to $T_1^\star \cup T_2^\star$ which connect vertices in $B^\star$ to vertices in 
$A^\star$ and lie on the {\em outside} of the cycle $C^\star$.

\medskip

We claim that
\begin{equation} \label{eq:dual_difference_2}
\beta(T_1^\star)-\beta(T_2^\star)=(y_2^\star)-(x_1^\star) + \sum_{\vec{f}\in F^\star} \partial(\vec{f}).
\end{equation}

The proof is similar to the previous argument, with $T_1^\star$ playing the role of $T_2$ and $T_2^\star$ playing the role of $T_1$.
Also, since the Bernardi tours now go clockwise, the edges {\em outside} $C^\star$ play the role previously played by the edges inside $C$.
See Figure~\ref{fig:6} for an illustration of the situation. 

\begin{figure}
\centering
\includegraphics[width=.6\textwidth]{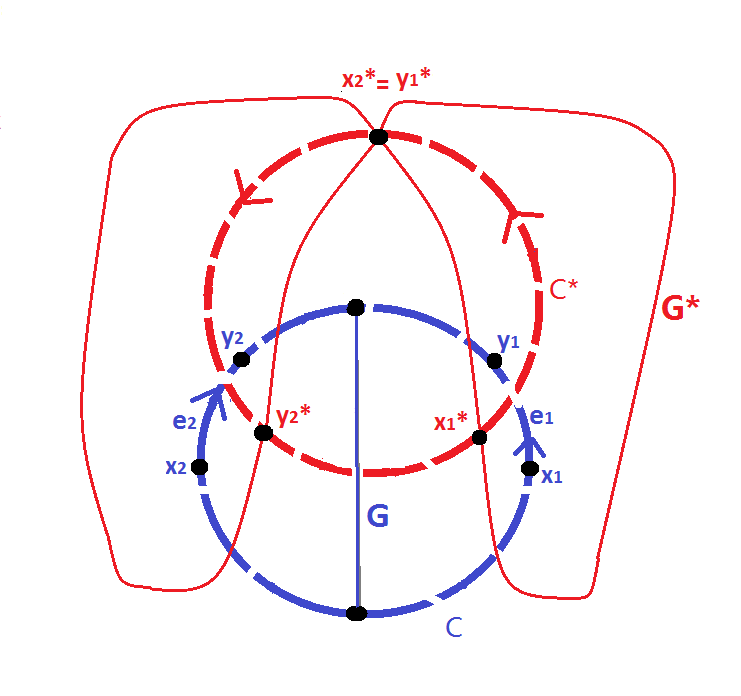}
\caption{An illustration of the proof of Theorem~\ref{theorem:planar_duality}.}
\label{fig:6}
\end{figure}

\medskip

Define $\lambda = \vec{e}_2 + \mu+\nu \in C_I$, where $\mu$ is the sum of all the oriented edges in the {\em clockwise} path along $C$ from $x_1$ to $x_2$ and $\nu$ is the sum of all the oriented edges in $F$.  By (\ref{eq:dual_difference}), we have $\partial(\lambda)=\beta(T_2)-\beta(T_1)$, where $\partial: C_I \to \Div^0(G)$ is as in \S\ref{sec:planar_duality_background}.

\medskip

Similarly, define $\lambda^\star = ((\vec{e}_2)^{\star})^{\rm op} + \mu^\star + \nu^\star \in C_I^\star$, where $\mu^\star$ is the sum of all the oriented edges in the {\em counterclockwise} path along $C^\star$ from $x_2^\star$ to $x_1^\star$ and $\nu^\star$ is the sum of all the oriented edges in $(F^\star)^{\rm op}$.  By (\ref{eq:dual_difference_2}), we have $\partial(\lambda^\star)=\beta(T_2^\star)-\beta(T_1^\star)$.

\medskip

One now checks that, under the natural duality isomorphism $\psi : C_I \to C_I^\star$ defined in Section~\ref{sec:planar_duality_background},
$\psi$ takes $\vec{e}_2$ to $((\vec{e}_2)^{\star})^{\rm op}$, $\mu$ to $\nu^\star$, and $\nu$ to $\mu^\star$.
Thus $\psi$ takes $\partial(\lambda)$ to $\partial(\lambda^\star)$, which means that $\Psi$ takes
$\beta(T_2)-\beta(T_1)$ to $\beta(T_2^\star)-\beta(T_1^\star)$.  This establishes (\ref{eq:commdiag}).
\end{proof}

\begin{remark}
The first author conjectured the analogue of Theorem~\ref{theorem:planar_duality} for the rotor-routing process at an AIM workshop in July 2013.
Together with Theorem~\ref{theorem:bernardi_rotor-routing_comparison} in the next section, Theorem~\ref{theorem:planar_duality} affirms our conjecture.  Chan et. al. \cite{Chan-et-al} have independently proved the compatibility of the rotor-routing torsor with planar duality using a different (more direct) method.
\end{remark}

\section{Comparison between the Bernardi and rotor-routing torsors}
\label{sec:comparison}

We show that given a ribbon graph $G$ and a vertex $v$ of $G$, the Bernardi torsor $\beta_v$ defined in this paper and the rotor-routing torsor defined in \cite{Holroyd-et-al} and \cite{CCG} are equal when $G$ is planar.  In particular, the canonical torsor structures on $S(G)$ defined by the Bernardi and rotor-routing processes are {\em the same} for planar ribbon graphs.  We also give an example which shows that $\beta_v$ can be different from $r_v$ when $G$ is non-planar.

\subsection{The planar case}

\begin{theorem} \label{theorem:bernardi_rotor-routing_comparison}
Let $G$ be a planar ribbon graph.  Then the Bernardi and rotor-routing processes define the same $\Pic^0(G)$-torsor structure on $S(G)$.
\end{theorem}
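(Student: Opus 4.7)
The plan is to reduce the equality $\beta_y = r_y$ of these two simply transitive $\Pic^0(G)$-actions on $S(G)$ to a check on a generating set of $\Pic^0(G)$. Since $G$ is connected, the set $\{[(x)-(y)] : x \text{ a neighbor of } y\}$ generates $\Pic^0(G)$: any degree-zero divisor telescopes along a path in $G$. If two torsor actions agree as permutations of $S(G)$ for each element of a generating set, they are equal globally. So I would aim to prove that, for every spanning tree $T$ and every neighbor $x$ of $y$,
\[
\beta_y([(x)-(y)])(T) \;=\; r_y([(x)-(y)])(T).
\]
Both planar-independence results (Theorems~\ref{thm: planar_rotor-routing} and \ref{theorem:planar_canonical}) give complete freedom in the choice of the root $y$ and the initial edge used for the Bernardi process, which I would exploit to simplify the local analysis at each vertex.

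Write $T' := r_y([(x)-(y)])(T)$, the spanning tree obtained from the rotor-routing algorithm applied to the rotor configuration $\rho_T$ (orienting $T$ toward $y$) with a chip starting at $x$. To match the Bernardi side it suffices to prove that $\beta_y(T') - \beta_y(T) \sim (x) - (y)$ in $\Div(G)$: then $\beta_y(T') = \beta_y(T) + [(x)-(y)]$ in $\Pic^g(G)$, which by definition of the torsor action means precisely $\beta_y([(x)-(y)])(T) = T'$. I would establish this identity by tracking the Bernardi break divisor along the rotor-routing trajectory $(\rho_0, x_0)=(\rho_T,x),\ldots,(\rho_k,x_k)=(\rho',y)$. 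At each step, either the current edge set $S_i$ remains a spanning tree (in which case the rotor rotation performs an honest swap) or momentarily becomes the disjoint union of a directed cycle and a tree as described in Section~\ref{sec:rotor-routing}. For the swap steps the effect on the Bernardi break divisor can be computed via the formula \eqref{eq:betaTdiff} from the proof of Theorem~\ref{theorem:independent_of_e}, and summing the individual contributions should telescope to $(x)-(y)$.

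The main obstacle will be the bookkeeping at the intermediate states where $S_i$ is not a spanning tree. For such states one must make sense of the "Bernardi break divisor" in a suitable generalized sense, and then maintain the invariant that the total Bernardi-type divisor at step $i$ equals $\beta_y(T) + (x_i) - (x)$ modulo principal divisors. Planarity enters decisively here via Proposition 9 of \cite{CCG}: every directed cycle in the planar ribbon graph $G$ is reversible, so any cycle transiently produced during the rotor-routing process will ultimately collapse in a way that contributes zero net change to the break-divisor class. Combined with the freedom to align the Bernardi initial edge with the counterclockwise cyclic ordering that drives both processes, this reversibility is what will let the invariant propagate from step $i$ to step $i+1$. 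Specializing the invariant to $i=k$ will then yield $\beta_y(T') - \beta_y(T) \sim (x) - (y)$, completing the match.
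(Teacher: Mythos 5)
Your overall strategy is the same as the paper's: reduce to the generators $[(x)-(y)]$ of $\Pic^0(G)$, fix the rotor-routing root at $y$ (legitimate by Theorem~\ref{thm: planar_rotor-routing}), and show $\beta(T')-\beta(T)\sim (x)-(y)$ by following the rotor-routing trajectory, with planarity entering through reversibility of directed cycles. But there is a genuine gap exactly where you flag "the main obstacle": you propose to maintain an invariant step-by-step through states $S_i$ that are \emph{not} spanning trees, which requires a "Bernardi break divisor in a suitable generalized sense" that you never define, and you then assert without argument that reversibility forces each transient cycle to "contribute zero net change." That assertion is the entire content of the theorem at those steps, and nothing in Proposition 9 of \cite{CCG} (which only characterizes planarity via reversibility) supplies it. The paper sidesteps the problem by inducting from one spanning tree to the \emph{next} spanning tree $T''$ in the sequence $S_0,S_1,\dots$, rather than step by step: using the precise description of the reversal process (the chip traverses the unicycle, reverses every rotor inside or on the cycle, and returns to its starting vertex), it shows that the net effect of each such block is a single edge exchange --- delete an edge $e'$ incident to $x$, add an edge $e''$ into $x''$ --- after which a direct comparison of the two Bernardi tours with initial data $(x'',e'')$ gives $\beta(T'')-\beta(T)=(x)-(x'')$, and these differences telescope. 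To repair your argument you would either need to carry out this tree-to-tree analysis or genuinely construct and control the generalized break divisor you invoke.

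Two smaller points. Formula \eqref{eq:betaTdiff} compares $\beta_{(v,e_1)}(T)$ and $\beta_{(v,e_2)}(T)$ for two initial edges and the \emph{same} tree; it does not compute $\beta(T'')-\beta(T)$ for an edge swap, so it is not the right tool for your "swap steps" (the relevant computation is the tour comparison just described, or the $\delta_{T,T'}$ analysis in the proof of Theorem~\ref{theorem:independent_of_e}). Also, your invariant $\beta_y(T)+(x_i)-(x)$ has the wrong sign: at $i=k$ it yields $\beta(T')-\beta(T)\sim (y)-(x)$, contradicting your stated target $(x)-(y)$; it should read $\beta_y(T)+(x)-(x_i)$.
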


\begin{proof}
Let $\beta$ be the Bernardi bijection associated to some initial data $(v,e)$, and let $T$ be a spanning tree of $G$.
Since $\Pic^0(G)$ is generated by the linear equivalence classes of divisors of the form $(x)-(y)$, with $x,y \in V(G)$, it suffices to prove that
if $T' = \left( (x)-(y) \right)_y T$ then $\beta(T')-\beta(T) \sim (x)-(y)$ for all $x,y \in V(G)$.

\medskip

By Theorem~\ref{thm: planar_rotor-routing}, we may assume that the root vertex for the rotor-routing process is $y$.
Let $T''$ be the first spanning tree after $T$ which appears during the rotor-routing process $\left( (x)-(y) \right)_y$ from $T$ to $T'$, and let
$x''$ be the vertex to which the chip is sent when we reach $T''$.
By induction on the number of rotor-routing steps, it is enough to show that $\beta(T'')-\beta(T) \sim (x)-(x'')$.

\medskip

{\bf Case 1:} $T''$ is obtained from $T$ in just one step of rotor-routing.

\medskip

In this case, $T''$ is obtained from $T$ by deleting an edge $e'$ incident to $x$ and adding an edge $e''$ from $x$ to $x''$.
By Theorems~\ref{theorem:independent_of_e} and \ref{theorem:planar_canonical}, we may assume without loss of generality that the initial data for the Bernardi process are $(x'',e'')$.  
If $L$ denotes the complement in $E(G)$ of $T \cup T''$, the Bernardi tours associated to $T$ and $T''$ will cut edges in $L$ at the same endpoint.  
The difference between $\beta(T)$ and $\beta(T'')$ therefore arises from the fact that the tour associated to $T$ cuts through $e''$ but not $e'$ and the tour associated to $T''$ cuts through $e'$ but not $e''$.
One verifies in this way that
\[
\beta(T'')-\beta(T) = (x) - (x'').
\]

\medskip

{\bf Case 2:} $T''$ is obtained from $T$ in more than one step of rotor-routing.  (See Figure~\ref{fig:RR_reverse_the_cycle} for an example.)

\medskip

In this case (referring back to the notation from Section~\ref{sec:rotor-routing}), if $S_0 = T$ then $S_1$ will consist of 2 connected components $A$ and $B$
such that $A$ contains a unique directed cycle $C$ with $x \in C$ and $B$ contains no directed cycle.
Since $G$ is planar, $C$ is reversible.  Consider the rotor-routing process which takes $(\rho_{\ell_1},x_{\ell_1}) := (\rho_1,x_1)$ to
$(\rho_{\ell_2},x_{\ell_2}) = (\bar{\rho_1},x_1)$.
By \cite[Proposition 6]{CCG}, the set $L_C$ of vertices $v \not\in C$ which are visited by this reversal process depends only on $C$ and is contained in $A$.  
More precisely, in the course of sending the chip back to the initial vertex $x_1$, the reversal process reverses all the directed edges inside or on the cycle $C$ and keeps the rest of the rotor configuration the same.

\medskip

In the next step of rotor-routing, the chip will be sent to $x_{\ell _2+ 1}$.  If $x_{\ell_2 + 1} \not\in A$ then $S_{\ell_2 + 1} = T''$ is a spanning tree.
Otherwise, $S_{\ell_2 + 1}$ will again contain a unique directed cycle $C'$ and the next several steps of rotor-routing will reverse this directed cycle.
This process will continue a finite number of times until we reach some $\ell_t$ such that $x_{\ell_t + 1} = x''$ and $S_{\ell_t + 1} = T''$.

\medskip

It follows that, during the entire process of rotor-routing from $T$ to $T''$, $T''$ can be obtained from $T$ by deleting an edge $e'$ incident to $x$
and the component $B$ and adding an edge $e''$ from $x_{\ell_t}$ to $x''$. Since $T''$ is the first spanning which tree appears in this process, we know that $e'$ is the next edge after $e''$ in the cyclic order  around $T|_A$ joining $A$ to $B$, otherwise the next edge (if it existed) after $e''$ would produce a spanning tree when it is encountered during the rotor routing process. 
As in Case 1, we may assume that the initial data for the Bernardi process are $(x'',e'')$, and by the same argument as Case 1 one then checks that
\[
\beta(T'')-\beta(T) = (x) - (x'')
\]
as desired.
\end{proof}

\begin{figure}
\centering
\includegraphics[width=.5\textwidth]{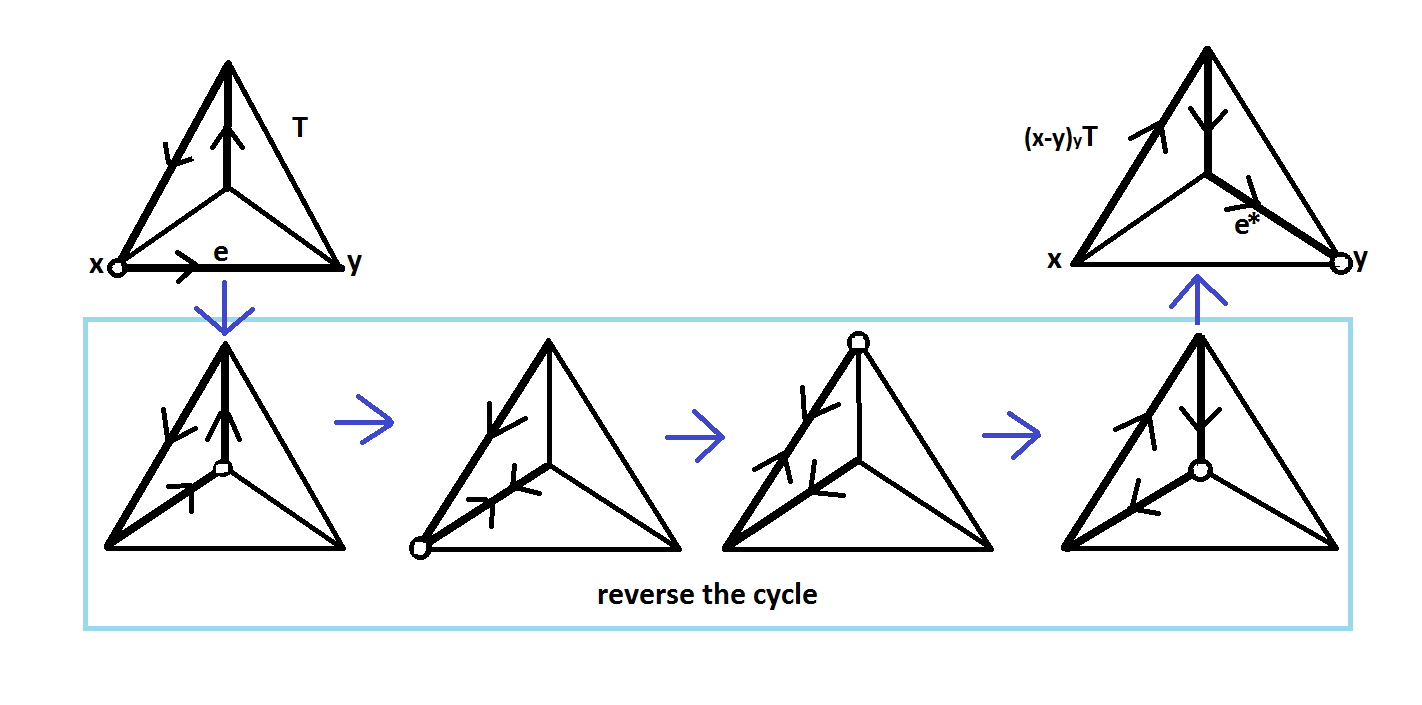}
\caption{An illustration of the proof of Theorem~\ref{theorem:bernardi_rotor-routing_comparison}.}
\label{fig:RR_reverse_the_cycle}
\end{figure}

\subsection{The non-planar case}
We now give an example which shows that for non-planar ribbon graphs, the torsors $\beta_v$ and $\r_v$ can be different.
In Figure~\ref{fig:T-Tstar}, the sink vertex is $x''$ and the chip begins at $x$.  After
one step of rotor-routing, the chip is sent to $x''$ and the spanning tree $T$ is transformed into $T''$.

\medskip

If $\beta_{x''} = r_{x''}$, then we would have
$$\beta_{x''}(T'')-\beta_{x''}(T)\thicksim(x)-(x'').$$

\medskip

However, setting the initial data for the Bernardi process as $(x'', (x'',x))$, we find that
$$\left( \beta_{x''}(T'')-\beta_{x''}(T) \right) - \left( (x)-(x'') \right) = (z)-(y)$$
which is not linearly equivalent to 0.

\begin{figure}
\centering
\includegraphics[width=.4\textwidth]{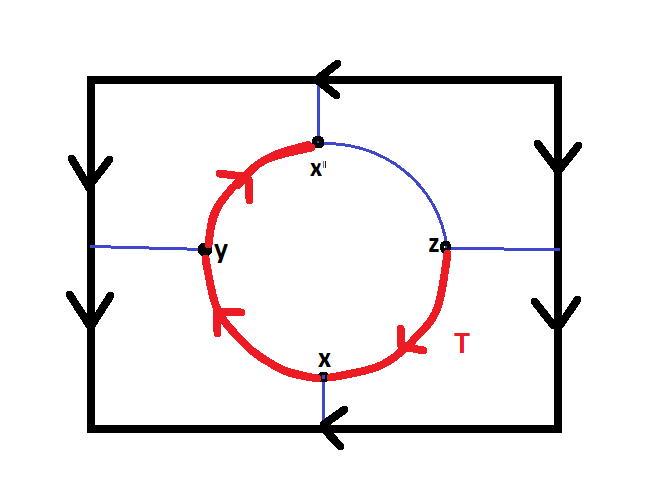}
\includegraphics[width=.4\textwidth]{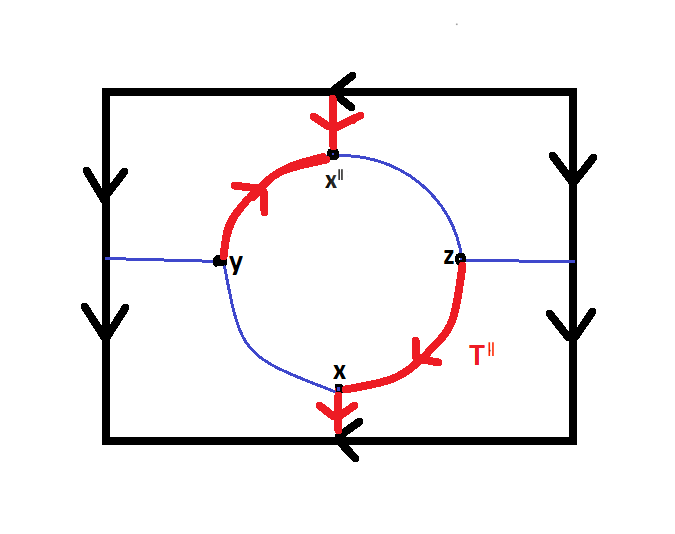}
\caption{Comparison of $\beta_v$ and $r_v$ for a non-planar ribbon graph.}
\label{fig:T-Tstar}
\end{figure}

\medskip

We conclude this paper with the following conjecture, one direction of which is Theorem~\ref{theorem:bernardi_rotor-routing_comparison}.

\begin{conjecture}
Let $G$ be a ribbon graph without loops or multiple edges. The Bernardi and rotor-routing torsors $\beta_v$ and $r_v$ agree for all $v$ if and only if $G$ is planar.
\end{conjecture}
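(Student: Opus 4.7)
The forward implication is already Theorem~\ref{theorem:bernardi_rotor-routing_comparison}, so the substance is: \emph{if $G$ is a non-planar ribbon graph without multiple edges, then $\beta_v \neq r_v$ for some vertex $v$.} My plan is to produce an explicit witness: a vertex $v$, an adjacent pair $x \sim y$, and a spanning tree $T$ such that
\[
r_v\bigl([(x)-(y)]\bigr)(T) \;\neq\; \beta_v\bigl([(x)-(y)]\bigr)(T).
\]
Since classes $[(x)-(y)]$ with $x \sim y$ generate $\Pic^0(G)$, any single such mismatch suffices, and conversely if no such mismatch exists then $\beta_v = r_v$ as torsor actions. The cleanest case, and the one realized in the example in Figure~\ref{fig:T-Tstar}, is the \emph{one-step} case: choose $T$, root it at $v$, and select a non-root vertex $x$ whose parent-edge $\rho_T[x] = xv'$ is followed in the cyclic order at $x$ by an edge $e'' = xv$ leading directly to $v$. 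Then $r_v([(x)-(v)])(T) = T'' := (T \setminus xv') \cup xv$ is computed in a single rotor-rotation, and our task is to show $\beta_v(T'') - \beta_v(T) \not\sim (x)-(v)$ in $\Div^0(G)$.

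First I would import the characterization from \cite[Proposition~9]{CCG}: $G$ is non-planar if and only if $G$ contains a non-reversible directed cycle $C$. The goal of the second step is to extract, from such a $C$, the local combinatorial data above. Concretely, I would look for an edge $e'' = xv \in C$ together with a spanning tree $T$ rooted at $v$ satisfying: (i) $v' := $ the parent of $x$ in $T$ satisfies $\rho_T[x] = xv'$ and $\widetilde{\rho_T[x]} = e''$ in the ribbon order at $x$; (ii) the fundamental cycle $C_{T,e''}$ is exactly $C$. Under (i)--(ii), the Bernardi tours of $T$ and $T''$ with initial data $(v,e'')$ agree on all edges whose ``inside/outside'' status with respect to $C$ is unambiguous, but they disagree on edges whose cutting vertex is permuted by the non-reversibility of $C$. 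The third step is a direct computation, modeled on Case~1 of the proof of Theorem~\ref{theorem:bernardi_rotor-routing_comparison}, showing
\[
\beta_{(v,e'')}(T'') - \beta_{(v,e'')}(T) \;=\; (x) - (v) + \Xi_C,
\]
where $\Xi_C$ is a non-trivial divisor of degree zero supported on the \emph{reversal locus} $L_C$ of \cite[Proposition~6]{CCG}, and to check that $\Xi_C$ is not principal under the hypothesis that $G$ has no multiple edges.

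The main obstacle is one of globalization: a non-reversible cycle $C$ is a purely topological feature of the ribbon graph, whereas producing a witness demands simultaneously choosing $T$, $v$, $x$ and an adjacent edge $e'' = xv$ compatible with (i) and (ii). For a generic non-planar ribbon graph it is not obvious that such a coupling exists; if the unique ``natural'' $C$ attached to a given $v$ already passes through $v$ in the wrong way, the one-step scheme fails and one is forced into a multi-step analysis in which discrepancies from several nested non-reversible cycles may partially cancel. To finish the argument I would adopt a minimal-counterexample strategy: assume $G$ is a non-planar ribbon graph of smallest edge count (among those without multiple edges) for which $\beta_v = r_v$ for all $v$, and use either edge contraction/deletion or Kuratowski/Wagner to reduce to a configuration where the one-step construction does go through. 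The delicate point, and the reason the statement remains conjectural, is that contraction interacts subtly with ribbon structure, so the induction hypothesis requires a careful reformulation that tracks both the ribbon data and the cancellation patterns of $\Xi_C$ across the contraction; it is this bookkeeping, rather than the local computation, that I expect to be the crux of any complete proof.
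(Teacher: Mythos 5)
The statement you are proving is stated in the paper as a \emph{conjecture}: the paper offers no proof of it. What the paper does establish is the ``if'' direction (Theorem~\ref{theorem:bernardi_rotor-routing_comparison}: planar implies $\beta_v = r_v$ for all $v$) together with a single explicit non-planar example (Figure~\ref{fig:T-Tstar}) in which $\beta_{x''} \neq r_{x''}$, plus the remark that the conjecture ``holds in numerous examples which we computed by hand.'' So there is no argument in the paper against which to measure your proposal; the converse direction is genuinely open.

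Your reduction is sound as far as it goes: since the classes $[(x)-(y)]$ with $x \sim y$ generate $\Pic^0(G)$ and both $\beta_v$ and $r_v$ are group actions, a single mismatch on a generator suffices, and the one-step rotor move is the right local model (it is exactly what happens in the paper's Figure~\ref{fig:T-Tstar} example). But the proposal has a genuine gap, which you yourself flag: nothing guarantees that a non-reversible cycle $C$ (whose existence for non-planar $G$ follows from \cite[Proposition~9]{CCG}) can be realized as the fundamental cycle $C_{T,e''}$ of a spanning tree $T$ and an edge $e'' = xv$ satisfying your ribbon-order condition (i), with $v$ on $C$ adjacent to $x$ in the required way. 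Moreover, even granting such a configuration, the claim that the discrepancy divisor $\Xi_C$ is nonzero and non-principal is asserted rather than proved --- and this is precisely where the ``no multiple edges'' hypothesis must enter, since the conjecture is stated only for simple ribbon graphs, suggesting the authors were aware of potential cancellation phenomena in the multigraph case. The minimal-counterexample/contraction strategy in your final paragraph is a plan, not an argument: deletion and contraction do not obviously preserve non-planarity of the induced ribbon structure, nor is it clear that a witness for a minor lifts to a witness for $G$. As it stands, your writeup is a reasonable research program for attacking the conjecture, not a proof of it.
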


The conjecture holds in numerous examples which we computed by hand.

\bigskip\bigskip


\bibliographystyle{alpha}
\bibliography{Bernardi}
\end{document}